\let\pa=\partial
\let\al=\alpha
\let\f=\frac
\let\p=\psi
\let\ve=\varepsilon
\let\pa=\partial
\def\non{\nonumber}
\def\na{\nabla}
\def\p{\partial}
\def\eqdefa{\buildrel\hbox{\footnotesize def}\over =}
\newcommand{\beq}{\begin{equation}}
\newcommand{\eeq}{\end{equation}}
\newcommand{\ben}{\begin{eqnarray}}
\newcommand{\een}{\end{eqnarray}}
\newcommand{\beno}{\begin{eqnarray*}}
\newcommand{\eeno}{\end{eqnarray*}}
\renewcommand{\theequation}{\thesection.\arabic{equation}}
\newtheorem{Theorem}{Theorem}[section]
\newtheorem{Proposition}[Theorem]{Proposition}
\newtheorem{Lemma}[Theorem]{Lemma}
\newcommand{\ud}{\mathrm{d}}
\newcommand{\ue}{\mathrm{e}}
\newcommand{\vv}{\mathbf{v}}
\newcommand{\xx}{\mathbf{x}}
\newcommand{\nn}{\mathbf{n}}
\newcommand{\hh}{\mathbf{h}}
\newcommand{\mm}{\mathbf{m}}
\newcommand{\NN}{\mathbf{N}}
\newcommand{\DD}{\mathbf{D}}
\newcommand{\FF}{\mathbf{F}}
\newcommand{\II}{\mathbf{I}}
\newcommand{\CR}{\mathcal{R}}
\newcommand{\CU}{\mathcal{U}}
\newcommand{\CJ}{\mathcal{J}}
\newcommand{\BP}{\mathbb{P}}
\newcommand{\BS}{{\mathbb{S}^2}}
\newcommand{\BR}{{\mathbb{R}^3}}
\newcommand{\BN}{{\mathbb{N}^3}}
\newcommand{\BOm}{\mathbf{\Omega}}
\newcommand{\mue}{{\mu_{\varepsilon}}}
\newcommand{\sqe}{{\sqrt{\varepsilon}}}
\begin{document}
\title[The Ericksen-Leslie system]
{Well-posedness of the Ericksen-Leslie system}

\author{Wei Wang}
\address{School of  Mathematical Sciences, Peking University, Beijing 100871, China}
\email{wangw07@pku.edu.cn}

\author{Pingwen Zhang}
\address{School of  Mathematical Sciences, Peking University, Beijing 100871, China}
\email{pzhang@pku.edu.cn}

\author{Zhifei Zhang}
\address{School of  Mathematical Sciences, Peking University, Beijing 100871, China}
\email{zfzhang@math.pku.edu.cn}

\date{\today}

\maketitle
\begin{abstract}
In this paper, we  prove the local well-posedness of the Ericksen-Leslie system, and the global well-posednss for small
initial data under the physical constrain condition on the Leslie coefficients, which ensures that the
energy of the system is dissipated. Instead of the Ginzburg-Landau approximation,
we construct an approximate system with the dissipated energy based on a new formulation of the system.
\end{abstract}

\renewcommand{\theequation}{\thesection.\arabic{equation}}
\setcounter{equation}{0}
\section{Introduction}

The hydrodynamic theory of liquid crystals was established by  Ericksen \cite{E-61, E-91} and Leslie \cite{Les} in the 1960's.
This theory treats the liquid crystal material as a continuum and
completely ignores molecular details. Moreover, this theory considers perturbations to a presumed oriented sample.
The configuration of the liquid crystals is described by a director
field $\nn(t, \xx)\in \BS, \xx\in\BR$.

The general Ericksen-Leslie system takes the form
\begin{eqnarray}\label{eq:EL}
\left\{
\begin{split}
&\vv_t+\vv\cdot\nabla\vv=-\nabla{p}+\frac{\gamma}{Re}\Delta\vv
+\frac{1-\gamma}{Re}\nabla\cdot\sigma,\\
&\na\cdot\vv=0,\\
&\nn\times\big(\hh-\gamma_1\NN-\gamma_2\DD\cdot\nn\big)=0,
\end{split}\right.
\end{eqnarray}
where $\vv$ is the velocity of the fluid, $p$ is the pressure, $Re$ is  the Reynolds number and $\gamma\in (0,1)$.
The stress $\sigma$ is modeled by the phenomenological constitutive relation
\beno
\sigma=\sigma^L+\sigma^E,
\eeno
where $\sigma^L$ is the viscous (Leslie) stress
\begin{eqnarray}\label{eq:Leslie stress}
\sigma^L=\alpha_1(\nn\nn:\DD)\nn\nn+\alpha_2\nn\NN+\alpha_3\NN\nn+\alpha_4\DD
+\alpha_5\nn\nn\cdot\DD+\alpha_6\DD\cdot\nn\nn \een
with $\DD=\frac{1}{2}(\kappa^T+\kappa), \kappa=(\na \vv)^T$, and
\beno
\NN=\nn_t+\vv\cdot\nabla\nn+\BOm\cdot\nn,\quad\BOm=\frac{1}{2}(\kappa^T-\kappa).
\eeno
The six constants $\al_1, \cdots, \al_6$ are called the Leslie coefficients.  While, $\sigma^E$ is the elastic (Ericksen) stress
\begin{eqnarray}\label{eq:Ericksen}
\sigma^E=-\frac{\partial{E_F}}{\partial(\nabla\nn)}\cdot(\nabla\nn)^T,
\end{eqnarray}
where $E_F=E_F(\nn,\nabla\nn)$ is the Oseen-Frank energy with the form
\beno
E_F=\f {k_1} 2(\na\cdot\nn)^2+\f {k_2} 2|\nn\times(\na\times\nn)|^2+\f {k_3} 2|\nn\cdot(\na\times \nn)|^2.
\eeno
Here $k_1, k_2, k_3$ are the elastic constant.
For the simplicity, we will consider the case $k_1=k_2=k_3=1$.
In such case, $E_F=\f12|\na\nn|^2$, and the molecular field $\hh$ is given by
\beno
&&\hh=-\frac{\delta{E_F}}{\delta{\nn}}=
\nabla\cdot\frac{\partial{E_F}}{\partial(\nabla\nn)}-\frac{\partial{E_F}}{\partial\nn}=-\Delta\nn,\\
&&\big(\sigma^E\big)_{ij}=-\big(\nabla\nn\odot\nabla\nn\big)_{ij}=-\partial_in_k\partial_jn_k.
\eeno
Finally, the Leslie coefficients and $\gamma_1, \gamma_2$ satisfy the following relations
\ben
&\alpha_2+\alpha_3=\alpha_6-\alpha_5,\label{Leslie relation}\\
&\gamma_1=\alpha_3-\alpha_2,\quad \gamma_2=\alpha_6-\alpha_5,\label{Leslie-coeff}
\een
where (\ref{Leslie relation}) is called Parodi's relation derived from the Onsager reciprocal relation \cite{Parodi}. These two relations
ensure that the system has a basic energy law.

As the general Ericksen-Leslie system is very complicated, most of earlier works treated the simplified(or approximated) system of (\ref{eq:EL}).
Motivated by the work on the harmonic heat flow, Lin and Liu \cite{LL-ARMA} add the penality term $\f 1 {4\varepsilon^2}(|\nn|^2-1)^2$ in $W$
in order to remove some higher-order nonlinearities due to the constraint $|\nn|=1$. In such case, the system becomes
\begin{eqnarray}\label{eq:EL-pen}
\left\{
\begin{split}
&\vv_t+\vv\cdot\nabla\vv=-\nabla{p}+\frac{\gamma}{Re}\Delta\vv
+\frac{1-\gamma}{Re}\nabla\cdot\sigma,\\
&\nn_t+\vv\cdot\nabla\nn+\BOm\cdot\nn-\mu_1\Delta\nn-\mu_2\DD\cdot\nn-\f1 {\varepsilon^2}(|\nn|^2-1)\nn=0.
\end{split}\right.
\end{eqnarray}
This is so called the Ginzburg-Landau approximation.
They proved the global existence of weak solution and the local existence and uniqueness of strong solution
of the system (\ref{eq:EL-pen}) under certain strong constrains on the Leslie coefficients.
We refer to \cite{WXL} for a recent result about the role of Parodi's relation
in the well-posedness and stability. However, whether the solution of (\ref{eq:EL-pen}) converges to that
of (\ref{eq:EL}) as $\varepsilon$ tends to zero is still a challenging question. When neglecting the Leslie stress
$\sigma_L$ in (\ref{eq:EL}), a simplest system preserving the basic energy law is the following
\begin{eqnarray}\label{eq:EL-simple}
\left\{
\begin{split}
&\vv_t+\vv\cdot\nabla\vv-\Delta\vv+\nabla{p}=-\nabla\cdot\big(\na \nn\odot \na \nn\big),\\
&\nn_t+\vv\cdot\nabla\nn-\Delta \nn=|\na\nn|^2\nn.
\end{split}\right.
\end{eqnarray}
For this system, the local existence and uniqueness of strong solution can be proved by using the standard energy method;
see \cite{Wang} for the well-posedness result with rough data.
Huang and Wang \cite{HW} give the following BKM type blow-up criterion: Let $T^*$ be the maximal existence time of the strong solution.
If $T^*<\infty$, then it is necessary
\beno
\int_0^{T^*}\|\na \times \vv(t)\|_{L^\infty}+\|\na\nn(t)\|_{L^\infty}^2\ud t=+\infty.
\eeno
In two dimensional case, the global existence of weak solution has been independently proved by Lin, Lin and Wang \cite{LLW} and Hong \cite{Hong},
where they construct a class of weak solution with at most a finite number of singular times. The uniqueness of weak solution is proved by Lin-Wang \cite{LW}
and Xu-Zhang \cite{XZ}.
The global existence of weak solution of (\ref{eq:EL-simple}) is a challenging open problem in three dimensional case.
On the other hand, in the case when $|\na\nn|^2\nn$ in (\ref{eq:EL-simple}) is replaced by $\f 1 {\ve^2}(|\nn|^2-1)\nn$, the global existence and partial regularity
of weak solution  were studied in \cite{LL-CPAM, LL-DCDS}.

The purpose of this paper is to study the well-posedness of the general Ericksen-Leslie system.
The first step is to understand the complicated energy-dissipation law of the system arising from the Leslie stress.
 Moreover, whether the energy defined in (\ref{eq:energy law}) is dissipated remains unknown in physics,
 since the Leslie coefficients are difficult to determine by using experimental results.
 We present a sufficient and necessary condition on the Leslie coefficients to ensure that the energy of the system is dissipated.
The next step is to construct an approximate system with the dissipated energy
under the physical condition on the Leslie coefficients. However, the Ginzburg-Landau approximation does not satisfy our requirement.
We introduce a new equivalent formulation of the system (\ref{eq:EL}). Based on this formulation, we can construct an approximate system
such that the energy is still dissipated, although the key property $|\nn|=1$ is destroyed.

Our main results are stated as follows.
\begin{Theorem}\label{thm:local}
Let $s\ge 2$ be an integer. Assume that the Leslie coefficients satisfy (\ref{Leslie condition}),
and the initial data $\na\nn_0\in H^{2s}(\BR), \vv_0\in H^{2s}(\BR)$. There exist $T>0$ and a unique solution
$(\vv, \nn)$ of the Ericksen-Leslie system (\ref{eq:EL}) such that
\beno
\vv\in C([0,T]; H^{2s}(\BR))\cap L^2(0,T; H^{2s+1}(\BR)),\quad \na\nn\in C([0,T]; H^{2s}(\BR)).
\eeno
Let $T^*$ be the maximal existence  time of the solution. If $T^*<+\infty$, then it is necessary
\beno
\int_0^{T^*}\|\na \times \vv(t)\|_{L^\infty}+\|\na\nn(t)\|_{L^\infty}^2\ud t=+\infty.
\eeno

\end{Theorem}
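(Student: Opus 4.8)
The plan is to recast (\ref{eq:EL}) as a closed parabolic system, perform a high-order energy estimate whose leading order is governed by the Leslie dissipation, and then sharpen this estimate into the continuation criterion.

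First I would remove the cross product in the director equation. Since $|\nn|=1$ gives $\nn\cdot\NN=0$, the identity $\nn\times(\hh-\gamma_1\NN-\gamma_2\DD\cdot\nn)=0$ is equivalent to $\gamma_1\NN=\hh-\gamma_2\DD\cdot\nn-\lambda\nn$, where the multiplier $\lambda=\nn\cdot\hh-\gamma_2(\nn\nn:\DD)$ is fixed by the constraint. With $\hh=\Delta\nn$ and $\gamma_1>0$ (part of the assumed condition on the Leslie coefficients), and using $\nn\cdot\Delta\nn=-|\na\nn|^2$, this becomes the genuinely parabolic equation
\begin{equation*}
\gamma_1\nn_t=\Delta\nn-\gamma_1(\vv\cdot\na\nn+\BOm\cdot\nn)-\gamma_2\DD\cdot\nn+\big(|\na\nn|^2+\gamma_2(\nn\nn:\DD)\big)\nn .
\end{equation*}
Substituting $\NN=\gamma_1^{-1}(\hh-\gamma_2\DD\cdot\nn-\lambda\nn)$ back into $\sigma^L$ expresses $\na\cdot\sigma$ through $\vv$, $\nn$ and their spatial derivatives only, so that (\ref{eq:EL}) closes as a Navier--Stokes type equation for $\vv$ coupled to the above equation for $\nn$. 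For existence I would run this on the energy-dissipating approximate system constructed earlier, so the substance is the a priori estimate below, which also yields uniqueness and continuation.

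The core is a uniform estimate for $Y(t)=\|\vv\|_{H^{2s}}^2+\|\na\nn\|_{H^{2s}}^2$. I apply $\pa^\al$ with $|\al|\le 2s$, test the velocity equation with $\pa^\al\vv$ and the director equation with $-\Delta\pa^\al\nn$, i.e. differentiate the energy $\tfrac12(\|\vv\|_{H^{2s}}^2+\|\na\nn\|_{H^{2s}}^2)$. The viscous term supplies $\tfrac{\gamma}{Re}\|\na\pa^\al\vv\|_{L^2}^2$ and the director Laplacian supplies the dissipation $\|\Delta\pa^\al\nn\|_{L^2}^2$. The dangerous terms are the top-order couplings: $\na\cdot\sigma^L$ carries $\NN$, hence through $\nn_t$ a contribution of the order of $\Delta\pa^\al\nn$ and of $\na\pa^\al\vv$ after differentiation, while the director equation carries $\DD\cdot\nn\sim\na\vv$; these are borderline and cannot be closed by product estimates alone. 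The main obstacle, and the heart of the argument, is to show that after integration by parts these top-order cross terms assemble, using Parodi's relation (\ref{Leslie relation}), into the differentiated Leslie dissipation quadratic form $\mathcal{D}(\pa^\al\DD,\pa^\al\NN)$; the assumed condition on the Leslie coefficients then guarantees $\mathcal{D}\ge c\big(\|\pa^\al\DD\|_{L^2}^2+\|\pa^\al\NN\|_{L^2}^2\big)$, turning them into good dissipative terms. This is exactly where the equivalent formulation and the physical constraint are indispensable.

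The remaining terms are genuinely lower order: commutators $[\pa^\al,\vv\cdot\na]\vv$, $[\pa^\al,\vv\cdot\na]\nn$ and products arising from the $\nn$-dependent coefficients and from $(|\na\nn|^2+\gamma_2\nn\nn:\DD)\nn$. I estimate these by Kato--Ponce commutator and Moser product inequalities, bounding them by $P(Y)$ times the already extracted dissipation (absorbed) plus $P(Y)$ itself, for a polynomial $P$. This yields $\tfrac{d}{dt}Y\le P(Y)$ and hence local existence on some $[0,T]$; applying the same scheme to the difference of two solutions in the lower norm $\|\vv\|_{L^2}^2+\|\na\nn\|_{L^2}^2$ gives uniqueness by Gronwall. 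For the continuation criterion I refine the estimate so that the factor multiplying $Y$ involves only $\|\na\vv\|_{L^\infty}$ and $\|\na\nn\|_{L^\infty}^2$ (plus lower order), by arranging the commutator estimates to peel off exactly one $L^\infty$ factor of a first derivative and absorbing the surplus derivatives into the dissipation through interpolation. Since $\vv$ is divergence-free, the Beale--Kato--Majda logarithmic inequality replaces $\|\na\vv\|_{L^\infty}$ by $\|\na\times\vv\|_{L^\infty}\log(e+Y)$, giving
\begin{equation*}
\frac{d}{dt}Y\le C\big(1+\|\na\times\vv\|_{L^\infty}+\|\na\nn\|_{L^\infty}^2\big)\,Y\log(e+Y).
\end{equation*}
An Osgood/Gronwall argument then shows $Y$ stays finite whenever $\int_0^{T^*}(\|\na\times\vv\|_{L^\infty}+\|\na\nn\|_{L^\infty}^2)\,\ud t<\infty$, contradicting the maximality of $T^*$ and proving the criterion.
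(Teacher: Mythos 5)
Your overall architecture (reformulate to remove the algebraic constraint, high-order energy estimates whose top-order cross terms must cancel structurally, Beale--Kato--Majda plus a double Gronwall for the continuation criterion) matches the paper's, and your refinement for the blow-up criterion is essentially what the paper does in its final step. But there are two genuine gaps at the heart of your argument. First, the coercivity you invoke is false: under (\ref{Leslie condition}) the Leslie dissipation form is only non-negative, not positive definite. Indeed $\beta_1=\beta_2=\beta_3=0$ is admissible (take $\alpha_4=0$, $\alpha_1=-\gamma_2^2/\gamma_1$, $\alpha_5+\alpha_6=\gamma_2^2/\gamma_1$), in which case all $\DD$-dissipation disappears, and after completing the square the rate part of the form, $\gamma_1\big|\NN+\frac{\gamma_2}{\gamma_1}(\II-\nn\nn)\cdot\DD\cdot\nn\big|^2$, vanishes on a whole subspace of $(\DD,\NN)$; so $\mathcal{D}\ge c\big(\|\pa^\al\DD\|_{L^2}^2+\|\pa^\al\NN\|_{L^2}^2\big)$ cannot hold. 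Moreover, Proposition \ref{prop:Leslie condition} gives even the non-negativity only for \emph{unit} $\nn$, and $\pa^\al\NN$ is not orthogonal to $\nn$, so the completion of squares you need at the differentiated level produces additional uncontrolled terms. The paper closes the estimate without any coercivity: it keeps only the viscous dissipation $\nu\|\na\Delta^s\vv\|_{L^2}^2$ and the degenerate director dissipation $\mu_1\|\Delta^{s+1}\nn\times\nn\|_{L^2}^2$ on the good side, discards the non-negative $\DD$-form, and shows that the dangerous pairings of $\Delta^s\BOm\cdot\nn$ and $\Delta^s\DD\cdot\nn$ against $\Delta^{s+1}\nn$ cancel \emph{exactly}, in pairs, between the director estimate and the velocity estimate --- this is the differentiated version of Lemma \ref{lem:cancelation}, an identity rather than an absorption.

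Second, and more fundamentally, your existence step is circular. Your a priori estimate is tied to $|\nn|=1$: you use it to determine the multiplier $\lambda$, to write $\nn\cdot\Delta\nn=-|\na\nn|^2$ (whence the full Laplacian dissipation $\|\Delta\pa^\al\nn\|_{L^2}^2$), and in the dissipation inequality (\ref{eq:dissipation}) itself. No standard approximation (Friedrichs mollification, Galerkin) preserves $|\nn|=1$, so the estimate does not transfer to any approximate system you could actually solve by Cauchy--Lipschitz; ``the energy-dissipating approximate system constructed earlier'' is precisely what has to be built, not something you may quote. This is the main difficulty the paper resolves: it rewrites the director equation with $\nn\times\big((\cdot)\times\nn\big)$ in place of the projection $(\II-\nn\nn)$ and replaces $\sigma^L$ by the modified stress $\widetilde\sigma^L$, the weights $|\nn|^2$ and $|\nn|^4$ being inserted exactly so that the dissipation relation (\ref{dissipate relation}) holds for arbitrary, non-unit $\nn$; all estimates in the existence proof are then performed without ever invoking $|\nn|=1$, which is restored only for the genuine solution in the blow-up-criterion step (where your scheme does coincide with the paper's). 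To repair your proof you would either have to redo your estimates for such a constraint-free modification, or show that your mollified parabolic system still dissipates --- which, as written, it does not.
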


For small initial data, we prove the following global well-posedness.

\begin{Theorem}\label{thm:global}
With the same assumptions as in Theorem \ref{thm:local}, there exists an $\varepsilon_0>0$ such that if
\beno
\|\na\nn_0\|_{H^{2s}}+\|\vv_0\|_{H^{2s}}\le \ve_0,
\eeno
then the solution obtained in Theorem \ref{thm:local} is global in time.
\end{Theorem}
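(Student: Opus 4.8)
The plan is to globalize the local solution from Theorem \ref{thm:local} by a continuation argument in which the smallness of the data keeps the nonlinear terms dominated by the dissipation. I work throughout with the equivalent formulation of (\ref{eq:EL}) introduced above, so that the director equation is a genuine parabolic equation for $\nn$ rather than the algebraic constraint $\nn\times(\cdots)=0$, and I use the fact that under (\ref{Leslie condition}) the basic energy
\beno
E(t)=\f12\int_{\BR}\big(|\vv|^2+|\na\nn|^2\big)\,\ud x
\eeno
obeys $\f{\ud}{\ud t}E(t)+\mathcal D(t)=0$ with a dissipation $\mathcal D(t)\ge0$ controlling $\|\na\vv\|_{L^2}^2$ and the director dissipation, the nonnegativity being precisely the content of (\ref{Leslie condition}). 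One point to keep in mind is that, since $|\nn|=1$, the field $\nn$ is neither small nor in $L^2$; every estimate must be arranged so that $\nn$ enters only through $\na\nn$ or through the bound $\|\nn\|_{L^\infty}=1$.

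The core of the argument is to propagate smallness at the top order. I would introduce
\beno
\cE(t)=\|\vv(t)\|_{H^{2s}}^2+\|\na\nn(t)\|_{H^{2s}}^2,\qquad
\cD(t)=\|\na\vv(t)\|_{H^{2s}}^2+\|\Delta\nn(t)\|_{H^{2s}}^2,
\eeno
apply $\pa^\al$ for $|\al|\le2s$ to both equations, and pair the velocity equation with $\pa^\al\vv$ and the director equation with $-\pa^\al\Delta\nn$. The principal linear parts should regenerate $\cD(t)$, and here (\ref{Leslie condition}) is again what makes the contributions of $\sigma^L$ coercive rather than merely bounded. Every remaining term is genuinely nonlinear, so derivatives can be distributed: with the product and commutator estimates in $H^{2s}(\BR)$, the embedding $H^{2s}\hookrightarrow L^\infty$ (valid since $2s\ge4>3/2$), and interpolation between the top-order norm appearing in $\cE$ and the one-derivative-higher norm in $\cD$, I expect each such term to be bounded by $C\sqrt{\cE}\,\cD$, which gives
\beno
\f{\ud}{\ud t}\cE(t)+c_0\cD(t)\le C\sqrt{\cE(t)}\,\cD(t).
\eeno

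The global bound then follows by continuity. Fix $\varepsilon_0<c_0/(2C)$; since the hypothesis gives $\cE(0)\le\varepsilon_0^2$, on any subinterval of $[0,T^*)$ where $\sqrt{\cE}\le c_0/(2C)$ the right-hand side is absorbed, so $\f{\ud}{\ud t}\cE+\f{c_0}2\cD\le0$ and hence $\cE(t)\le\cE(0)\le\varepsilon_0^2$, which stays strictly below the threshold $\big(c_0/(2C)\big)^2$. A standard bootstrap then yields $\cE(t)\le\varepsilon_0^2$ for all $t<T^*$, so $\cE$ is uniformly bounded. Since $\|\na\times\vv\|_{L^\infty}\lesssim\|\vv\|_{H^{2s}}$ and $\|\na\nn\|_{L^\infty}^2\lesssim\|\na\nn\|_{H^{2s}}^2\le\cE$ are then bounded, a finite $T^*$ would make $\int_0^{T^*}\big(\|\na\times\vv\|_{L^\infty}+\|\na\nn\|_{L^\infty}^2\big)\ud t$ finite, contradicting the blow-up criterion of Theorem \ref{thm:local}; therefore $T^*=+\infty$.

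The step I expect to be the main obstacle is the top-order estimate giving the displayed differential inequality. Two things require care. First, after the reformulation the principal linear terms --- the viscous term, the director diffusion, and the couplings through $\DD$, $\NN$ and $\BOm\cdot\nn$ generated by the six Leslie coefficients --- must genuinely recombine into a coercive $\cD(t)$; this is exactly where (\ref{Leslie condition}) is indispensable and where the algebra is most delicate. Second, several nonlinear terms are only of the naive size $\cE\,\cD^{1/2}$ and must be upgraded to $\cE^{1/2}\cD$ by interpolating the derivatives of $\nn$ between the orders present in $\cE$ and in $\cD$, so that they can be hidden in the dissipation; controlling the commutators produced by $\vv\cdot\na$ and by the $\nn$-dependent coefficients at order $2s$, while keeping $\nn$ out of every $L^2$ norm and closing the velocity and director estimates simultaneously, is the technical heart of the proof.
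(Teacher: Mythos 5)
Your global strategy coincides with the paper's: the same energy/dissipation functionals, the same target inequality $\frac{\ud}{\ud t}\cE+c_0\cD\le C\sqrt{\cE}\,\cD$, the same smallness bootstrap, and the same appeal to the blow-up criterion of Theorem \ref{thm:local}. The gap is in the step you yourself flag as the main obstacle, and it is not merely technical: your proposed dichotomy --- ``principal linear parts are coercive by (\ref{Leslie condition}); every remaining term is genuinely nonlinear and hence of size $\sqrt{\cE}\,\cD$'' --- breaks down. Since $\|\nn\|_{L^\infty}=1$ exactly, nonlinearity in $\nn$ buys no smallness. Concretely, in the director estimate the contributions in which all $2s$ derivatives fall on the velocity,
\[
\big\langle\nn\times\big((\Delta^s\BOm\cdot\nn)\times\nn\big),\Delta^{s+1}\nn\big\rangle,
\qquad
-\mu_2\big\langle\nn\times\big((\Delta^s\DD\cdot\nn)\times\nn\big),\Delta^{s+1}\nn\big\rangle,
\]
are of size $\|\na\vv\|_{H^{2s}}\|\Delta^{s+1}\nn\|_{L^2}\sim\cD$ with $O(1)$ constants, not $O(\sqrt{\cE}\,\cD)$; the mirror terms appear in the velocity estimate when all derivatives in $\sigma_2(\nn)$ fall on $\hh$. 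Condition (\ref{Leslie condition}) cannot rescue them: it yields nonnegativity only of the symmetric stress $\sigma_1$ tested against $\Delta^s\DD$, i.e.\ (\ref{dissipate relation}). Nor can Young's inequality absorb them into $\nu\|\na\Delta^s\vv\|_{L^2}^2+\mu_1\|\Delta^{s+1}\nn\|_{L^2}^2$, since that would require a size restriction on the coefficients (roughly $(1+|\mu_2|)^2\lesssim\nu\mu_1$) that is not among the hypotheses.

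The missing idea is that these terms cancel \emph{exactly} when the two estimates are added --- the top-order avatar of Lemma \ref{lem:cancelation}. Using $|\nn|=1$, the form $\langle\nn\times(X\times\nn),Y\rangle=\langle X,Y\rangle-\langle\nn\cdot X,\nn\cdot Y\rangle$ is symmetric in $(X,Y)$, and $\nn\cdot(\Delta^s\BOm\cdot\nn)=0$ by antisymmetry of $\BOm$; hence the $\BOm$-term above cancels $-\big\langle\nn\times(\Delta^{s+1}\nn\times\nn),\Delta^s\BOm\cdot\nn\big\rangle$ coming from the velocity estimate, and the two $\mu_2$-terms cancel likewise. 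This is precisely why the paper carries these pairings explicitly on the right-hand sides of (\ref{eq:energy-gnf}) and (\ref{eq:energy-gvf}) rather than estimating them, and only after summation bounds what remains by $C\big(E_s^{1/2}+E_s\big)D_s$. A second, smaller omission: the block $\|\Delta\nn\|_{H^{2s}}^2$ in your $\cD$ is not produced directly by the equation, whose dissipative term only controls quantities of the type $|\nn\times\hh|^2$; one needs $|\nn|=1$ and the identity (\ref{eq:harmonic}) to replace $\nn\times(\Delta\nn\times\nn)$ by $\Delta\nn+|\na\nn|^2\nn$, the cubic error then being of acceptable size (cf.\ (\ref{eq:energy-L2}) and the treatment of $I_4$). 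With these two repairs your outline closes exactly as in the paper.
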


The other sections of this  paper are organized as follows. In section 2, we derive the basic energy law of the system and give the physical constrain
condition on the Leslie coefficients. In section 3, we introduce a new equivalent formulation. Section 4 is devoted to the proof of local well-posedness.
In section 5, we prove the global well-posedness of the system for small initial data.

\setcounter{equation}{0}
\section{Basic energy-dissipation law}

We first derive the basic energy law of the system (\ref{eq:EL}).

\begin{Proposition}\label{prop:energy law}
If $(\vv,\nn)$ is a smooth solution of (\ref{eq:EL}), then it holds that
\begin{align}
\frac{\ud}{\ud{t}}\int_{\BR}\frac{Re}{2(1-\gamma)}|\vv|^2+E_F\ud\xx
=-&\int_{\BR}\Big(\frac{\gamma}{1-\gamma}|\nabla\vv|^2+(\alpha_1+\frac{\gamma_2^2}{\gamma_1})|\DD:\nn\nn|^2
+\alpha_4\DD:\DD\nonumber\\
&\quad+(\alpha_5+\alpha_6-\frac{\gamma_2^2}{\gamma_1})|\DD\cdot\nn|^2
+\frac{1}{\gamma_1}|\nn\times\hh|^2\Big)\ud\xx.\quad\label{eq:energy law}
\end{align}
\end{Proposition}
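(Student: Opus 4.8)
The plan is to test the momentum equation against $\frac{Re}{1-\gamma}\vv$ and the Frank energy against its own time derivative, and then to let the cancellations forced by $\na\cdot\vv=0$, by the constraint $|\nn|=1$, by Parodi's relation, and by the director equation do the work. Pairing the momentum equation with $\frac{Re}{1-\gamma}\vv$ and integrating over $\BR$, incompressibility kills both the convection term and the pressure gradient, the viscous term yields $-\frac{\gamma}{1-\gamma}\int|\na\vv|^2$, and an integration by parts turns the stress term into $-\int\na\vv:\sigma=-\int\na\vv:\sigma^L-\int\na\vv:\sigma^E$. In parallel, $\frac{d}{dt}\int E_F=\int\na\nn:\na\nn_t=-\int\Delta\nn\cdot\nn_t$, into which I substitute $\nn_t=\NN-\vv\cdot\na\nn-\BOm\cdot\nn$ from the definition of $\NN$, producing a transport term, a molecular-field term in $\NN$, and a spin term in $\BOm\cdot\nn$.

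The first cancellation is elastic versus transport: using $(\sigma^E)_{ij}=-\pa_in_k\pa_jn_k$, one integration by parts together with $\na\cdot\vv=0$ (to discard $\vv\cdot\na(\frac12|\na\nn|^2)$) shows that $-\int\na\vv:\sigma^E$ exactly annihilates the transport contribution $\Delta\nn\cdot(\vv\cdot\na\nn)$. What remains is $-\int\na\vv:\sigma^L$ plus the molecular-field and spin terms. I then split $\na\vv$ into its symmetric part $\DD$ and antisymmetric part $\BOm$ and contract $\sigma^L$ against each. Using $|\nn|=1$ (so that $\nn\cdot\NN=0$ and $\nn\cdot(\BOm\cdot\nn)=0$), the contraction against $\DD$ produces $\alpha_1|\DD:\nn\nn|^2$, $\alpha_4\DD:\DD$, $(\alpha_5+\alpha_6)|\DD\cdot\nn|^2$ and $(\alpha_2+\alpha_3)(\DD\cdot\nn)\cdot\NN$, while the contraction against $\BOm$ produces only $\gamma_1(\BOm\cdot\nn)\cdot\NN$ and $\gamma_2(\BOm\cdot\nn)\cdot(\DD\cdot\nn)$. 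The decisive structural input is the third equation of (\ref{eq:EL}), which states that $\hh-\gamma_1\NN-\gamma_2\DD\cdot\nn$ is parallel to $\nn$; substituting this for the molecular field in the spin term makes every $\BOm\cdot\nn$ contraction cancel, and Parodi's relation (\ref{Leslie relation}) together with (\ref{Leslie-coeff}), giving $\alpha_2+\alpha_3=\gamma_2$, lets the two $(\DD\cdot\nn)\cdot\NN$ contributions be combined into a single coefficient.

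After these cancellations the dissipation collapses to $\alpha_1|\DD:\nn\nn|^2+\alpha_4\DD:\DD+(\alpha_5+\alpha_6)|\DD\cdot\nn|^2+\gamma_1|\NN|^2+2\gamma_2(\DD\cdot\nn)\cdot\NN$, and the last step is to remove $\NN$ by completing the square. Projecting the director equation onto the plane orthogonal to $\nn$ gives $\gamma_1\NN=\hh_\perp-\gamma_2(\DD\cdot\nn)_\perp$, where $\mathbf{a}_\perp=\mathbf{a}-(\mathbf{a}\cdot\nn)\nn$; hence $\gamma_1|\NN|^2+2\gamma_2(\DD\cdot\nn)\cdot\NN=\frac{1}{\gamma_1}|\hh_\perp|^2-\frac{\gamma_2^2}{\gamma_1}|(\DD\cdot\nn)_\perp|^2$. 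Using $|\hh_\perp|^2=|\nn\times\hh|^2$ and $|(\DD\cdot\nn)_\perp|^2=|\DD\cdot\nn|^2-|\DD:\nn\nn|^2$ and regrouping produces exactly the coefficients $\alpha_1+\frac{\gamma_2^2}{\gamma_1}$ on $|\DD:\nn\nn|^2$, $\alpha_5+\alpha_6-\frac{\gamma_2^2}{\gamma_1}$ on $|\DD\cdot\nn|^2$, and $\frac{1}{\gamma_1}$ on $|\nn\times\hh|^2$, which is (\ref{eq:energy law}).

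The step I expect to be the real obstacle is keeping the three sign conventions mutually consistent: the orientation in the stress work $\int\na\vv:\sigma$, the sign of the co-rotational term $\BOm\cdot\nn$ inside $\NN$, and the sign of the molecular field $\hh$ relative to $\Delta\nn$. It is precisely the compatibility of these three, enforced through Parodi's relation and $|\nn|=1$, that causes the $\BOm\cdot\nn$ terms to cancel and the indefinite-looking remainder $\gamma_1|\NN|^2+2\gamma_2(\DD\cdot\nn)\cdot\NN$ to complete to the manifestly structured square $\frac{1}{\gamma_1}|\nn\times\hh|^2$ modulo the $\DD$-quadratic corrections; a single misplaced sign collapses the cancellation and destroys the clean form, so I would fix one index convention at the outset and verify the no-flow case ($\vv\equiv0$, where $\frac{d}{dt}\int E_F=-\frac{1}{\gamma_1}\int|\nn\times\hh|^2$) as a consistency check before assembling the general identity.
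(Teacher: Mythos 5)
Your proposal is correct and follows essentially the same route as the paper's proof: pairing the momentum equation with $\frac{Re}{1-\gamma}\vv$, cancelling the Ericksen stress against the transport term via $\na\cdot\vv=0$, contracting $\sigma^L$ with $\DD+\BOm$ under Parodi's relation, using the third equation of (\ref{eq:EL}) together with $(\BOm\cdot\nn)\cdot\nn=0$ to kill every $\BOm\cdot\nn$ term, and finally eliminating $\NN$ through the director equation by a difference of squares. The only cosmetic difference is that you express the last step with orthogonal projections $\mathbf{a}_\perp=\mathbf{a}-(\mathbf{a}\cdot\nn)\nn$ where the paper uses cross products $\nn\times\mathbf{a}$; since $|\mathbf{a}_\perp|=|\nn\times\mathbf{a}|$ when $|\nn|=1$, the algebra is identical.
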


\begin{proof} Using the first equation of (\ref{eq:EL}) and $\na\cdot\vv=0$, we get
\begin{eqnarray}
&&\frac{\ud}{\ud{t}}\int_{\BR}\frac{Re}{2(1-\gamma)}|\vv|^2+E_F\ud\xx\nonumber\\
&&=\int_{\BR}\frac{Re}{1-\gamma}\vv\cdot\vv_t\ud\xx+\int_{\BR}
\frac{\delta{E_F}}{\delta\nn}\cdot \nn_t\ud\xx\nonumber\\
&&=-\int_{\BR}\frac{\gamma}{1-\gamma}|\nabla\vv|^2+(\sigma^L+\sigma^E):\nabla\vv\ud\xx+\int_{\BR}
\frac{\delta{E_F}}{\delta\nn}\cdot(\dot{\nn}-\vv\cdot\nabla\nn)\ud\xx,\label{eq:energ law-1}
\end{eqnarray}
where $\dot\nn=\nn_t+\vv\cdot\nabla\nn$. Using $\na\cdot\vv=0$ again, we have
\begin{eqnarray}
&&\int_{\BR}\sigma^E:\nabla\vv+\frac{\delta{E_F}}{\delta\nn}\cdot(\vv\cdot\nabla\nn)\ud\xx\nonumber\\
&&=\int_{\BR}(-\frac{\partial{E_F}}{\partial(\nabla\nn)}\cdot(\nabla\nn)^T):\nabla\vv-
\Big(\nabla\cdot\frac{\partial{E_F}}{\partial(\nabla\nn)}
-\frac{\partial{E_F}}{\partial\nn}\Big)\cdot(\vv\cdot\nabla\nn)\ud\xx\nonumber\\
&&=\int_{\BR}\frac{\partial{E_F}}{\partial(\nabla\nn)}:(\vv\cdot\nabla^2\nn)
+\frac{\partial{E_F}}{\partial\nn}\cdot(\vv\cdot\nabla\nn)\ud\xx\nonumber\\\label{eq:energ law-2}
&&=\int_{\BR}\vv\cdot\nabla{E_F}(\nn,\nabla\nn)\ud\xx=0.
\end{eqnarray}
Due to (\ref{eq:Leslie stress}), (\ref{Leslie relation}) and (\ref{Leslie-coeff}), we find
\begin{eqnarray}
&&\int_{\BR}\sigma^L:\nabla\vv\ud\xx\nonumber\\
&&=\int_{\BR}\Big( (\alpha_1(\nn\nn\cdot\DD)\nn\nn+\alpha_2\nn\NN+\alpha_3\NN\nn+\alpha_4\DD
+\alpha_5\nn\nn\cdot\DD+\alpha_6\DD\cdot\nn\nn):(\DD+\BOm)\Big)\ud\xx\nonumber\\
&&=\int_{\BR}\Big(\alpha_1(\nn\nn:\DD)^2+\alpha_4\DD:\DD+(\alpha_5+\alpha_6)|\DD\cdot\nn|^2
+(\alpha_2+\alpha_3)\nn\cdot(\DD\cdot\NN)\nonumber\\
&&\qquad\quad+(\alpha_2-\alpha_3)\nn\cdot(\BOm\cdot\NN)
-(\alpha_5-\alpha_6)(\DD\cdot\nn)\cdot(\BOm\cdot\nn)\Big)\ud\xx\nonumber\\
&&=\int_{\BR}\Big(\alpha_1(\nn\nn:\DD)^2+\alpha_4\DD:\DD+(\alpha_5+\alpha_6)|\DD\cdot\nn|^2
+\gamma_2\nn\cdot(\DD\cdot\NN)\nonumber\\
&&\qquad-\gamma_1\nn\cdot(\BOm\cdot\NN)
+\gamma_2(\DD\cdot\nn)\cdot(\BOm\cdot\nn)\Big)\ud\xx\nonumber\\
&&=\int_{\BR}\Big(\alpha_1(\nn\nn:\DD)^2+\alpha_4\DD:\DD+(\alpha_5+\alpha_6)|\DD\cdot\nn|^2
+\gamma_2\NN\cdot(\DD\cdot\nn)\nonumber\\
&&\qquad+\gamma_1\NN\cdot(\BOm\cdot\nn)
+\gamma_2(\DD\cdot\nn)\cdot(\BOm\cdot\nn)\Big)\ud\xx,\nonumber
\end{eqnarray}
and
\begin{eqnarray}
&&-\int_{\BR}\frac{\delta{E_F}}{\delta\nn}\cdot\dot{\nn}\ud\xx=\int_{\BR}\hh\cdot\dot{\nn}\ud\xx
=\int_{\BR}\hh\cdot(\NN-\BOm\cdot\nn)\ud\xx.\nonumber
\end{eqnarray}
The third equation of (\ref{eq:EL}) implies that
\beno
\int_{\BR}(\BOm\cdot\nn)\cdot\big(\gamma_1\NN
+\gamma_2(\DD\cdot\nn)-\hh\big)\ud\xx=0,
\eeno
and direct calculations show that
\begin{align}
\int_{\BR}\big(\gamma_2\NN\cdot(\DD\cdot\nn)+\hh\cdot\NN\big)\ud\xx
&=\int_{\BR}\big(\nn\times\NN\big)\cdot\big(\nn\times\hh+\gamma_2\nn\times\DD\cdot\nn\big)\ud\xx\nonumber\\
&=\int_{\BR}\frac{1}{\gamma_1}\big(\nn\times\hh-\gamma_2\nn\times\DD\cdot\nn\big)
\cdot\big(\nn\times\hh+\gamma_2\nn\times\DD\cdot\nn\big)\ud\xx\nonumber\\
&=\int_{\BR}\frac{1}{\gamma_1}|\nn\times\hh|^2
-\frac{\gamma_2^2}{\gamma_1}|\DD\cdot\nn|^2+\frac{\gamma_2^2}{\gamma_1}|\nn\cdot\DD\cdot\nn|^2\ud\xx.\nonumber
\end{align}
Thus, we have
\ben
&&\int_{\BR}\sigma^L:\nabla\vv-\frac{\delta{E_F}}{\delta\nn}\cdot\dot{\nn}\ud\xx\nonumber\\
&&=\int_{\BR}\Big((\alpha_1+\frac{\gamma_2^2}{\gamma_1})|\DD:\nn\nn|^2
+\alpha_4\DD:\DD+(\alpha_5+\alpha_6-\frac{\gamma_2^2}{\gamma_1})|\DD\cdot\nn|^2
+\frac{1}{\gamma_1}|\nn\times\hh|^2\Big)\ud\xx.\label{eq:energ law-3}
\een
Then the energy law (\ref{eq:energy law}) follows from (\ref{eq:energ law-1})-(\ref{eq:energ law-3}).
\end{proof}

The following proposition presents a sufficient and necessary condition on the
Leslie coefficients to ensure that the energy is dissipated; see also \cite{LL-ARMA} for the related discussions on the choice of the Leslie coefficients.
We denote
\begin{align}
\beta_1=\alpha_1+\frac{\gamma_2^2}{\gamma_1},\quad\beta_2=\alpha_4,
\quad\beta_3=\alpha_5+\alpha_6-\frac{\gamma_2^2}{\gamma_1}.\non
\end{align}

\begin{Proposition} \label{prop:Leslie condition}
The following dissipation relation holds
\begin{align}\label{eq:dissipation}
\beta_1(\nn\nn:\DD)^2+\beta_2\DD:\DD+\beta_3|\DD\cdot\nn|^2\ge0
\end{align}
for any symmetric trace free matrix $\DD$
and unit vector $\nn$, if and only if
\begin{align}\label{Leslie condition}
\beta_2\ge0,\quad2\beta_2+\beta_3\ge0,\quad\frac{3}{2}\beta_2+\beta_3+\beta_1\ge0.
\end{align}
\end{Proposition}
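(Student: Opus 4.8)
The plan is to use the orthogonal invariance of the three quadratic quantities to reduce to a fixed direction, and then to analyze an elementary diagonal quadratic form in the entries of $\DD$. First I would note that for any orthogonal matrix $R$, replacing $(\nn,\DD)$ by $(R\nn,\,R\DD R^T)$ leaves each of $\nn\nn:\DD$, $\DD:\DD$, and $|\DD\cdot\nn|^2$ unchanged, while $R\DD R^T$ stays symmetric and trace free. Since a suitable $R$ carries any unit vector $\nn$ to $\ee_3=(0,0,1)$, and since $R\DD R^T$ ranges over all symmetric trace free matrices as $\DD$ does, it suffices to prove the equivalence in the special frame $\nn=\ee_3$.

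In this frame, writing $\DD=(D_{ij})$ with $D_{ij}=D_{ji}$ and $D_{11}+D_{22}+D_{33}=0$, I would compute $\nn\nn:\DD=D_{33}$, $|\DD\cdot\nn|^2=D_{13}^2+D_{23}^2+D_{33}^2$, and $\DD:\DD=D_{11}^2+D_{22}^2+D_{33}^2+2(D_{12}^2+D_{13}^2+D_{23}^2)$. Substituting, the left-hand side of (\ref{eq:dissipation}) becomes a diagonal quadratic form in which the off-diagonal entries decouple completely from the constrained diagonal entries:
\[
2\beta_2 D_{12}^2+(2\beta_2+\beta_3)(D_{13}^2+D_{23}^2)+\beta_2(D_{11}^2+D_{22}^2)+(\beta_1+\beta_2+\beta_3)D_{33}^2.
\]
This decoupling is the structural fact that drives both directions of the argument.

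For necessity, I would test (\ref{eq:dissipation}) against three simple matrices. Supporting $\DD$ on the $(1,2)$ entry alone yields $2\beta_2 D_{12}^2\ge0$, hence $\beta_2\ge0$; supporting it on the $(1,3)$ entry alone yields $(2\beta_2+\beta_3)D_{13}^2\ge0$, hence $2\beta_2+\beta_3\ge0$; and taking the diagonal matrix $\DD=\mathrm{diag}(-\tfrac12,-\tfrac12,1)$ yields $\tfrac32\beta_2+\beta_1+\beta_3\ge0$. These are exactly the three inequalities in (\ref{Leslie condition}). For sufficiency, I would assume (\ref{Leslie condition}): the first two inequalities make the $D_{12},D_{13},D_{23}$ contributions nonnegative, and for the diagonal part I would invoke $\beta_2\ge0$ together with the constraint $D_{11}+D_{22}=-D_{33}$ and the elementary bound $D_{11}^2+D_{22}^2\ge\tfrac12(D_{11}+D_{22})^2=\tfrac12 D_{33}^2$ to estimate the diagonal part below by $(\tfrac32\beta_2+\beta_1+\beta_3)D_{33}^2\ge0$. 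Adding the pieces gives (\ref{eq:dissipation}).

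The computation is entirely elementary, so there is no serious obstacle; the one delicate point is the sufficiency direction, where the trace free constraint must be used precisely once, through $D_{11}^2+D_{22}^2\ge\tfrac12 D_{33}^2$, to produce the coefficient $\tfrac32$ in the third inequality of (\ref{Leslie condition}). Getting this constant exactly right, rather than from a looser term-by-term estimate, is what makes the necessity and sufficiency directions meet in a sharp equivalence.
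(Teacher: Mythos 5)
Your proposal is correct and follows essentially the same route as the paper's proof: reduction to $\nn=(0,0,1)^T$ by rotation invariance, the same decoupled expansion of the quadratic form, and the same use of $D_{11}^2+D_{22}^2\ge\tfrac12(D_{11}+D_{22})^2$ together with the trace-free constraint to obtain the sharp coefficient $\tfrac32$. Your explicit test matrices for necessity merely make precise what the paper states as an if-and-only-if for each decoupled piece, so the two arguments are the same in substance.
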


\begin{proof}
By the rotation invariance, we may assume $\nn=(0,0,1)^T$ and $\DD=(D_{ij})_{3\times3}$ with $D_{11}+D_{22}+D_{33}=0$.
It is easy to get
\begin{align*}
&\beta_1(\nn\nn:\DD)^2+\beta_2\DD:\DD+\beta_3|\DD\cdot\nn|^2\\
&=\beta_1D_{33}^2+\beta_2(D_{11}^2+D_{22}^2+D_{33}^2+2D_{12}^2+2D_{32}^2+2D_{31}^2)+\beta_3(D_{31}^2+D_{32}^2+D_{33}^2)\\
&=2\beta_2D_{12}^2+(2\beta_2+\beta_3)(D_{31}^2+D_{32}^2)
+\beta_2(D_{11}^2+D_{22}^2)+(\beta_1+\beta_2+\beta_3)D_{33}^2\\
&=2\beta_2D_{12}^2+(2\beta_2+\beta_3)(D_{31}^2+D_{32}^2)
+\beta_2(D_{11}^2+D_{22}^2)+(\beta_1+\beta_2+\beta_3)(D_{11}+D_{22})^2.
\end{align*}
The inequality holds
$$2\beta_2D_{12}^2+(2\beta_2+\beta_3)(D_{31}^2+D_{32}^2)\ge0$$
for all $D_{12},D_{31},$ and $D_{32}$, if and only if $\beta_2\ge0,$ and $2\beta_2+\beta_3\ge0$.

As $D_{11}^2+D_{22}^2\ge\frac{1}{2}(D_{11}+D_{22})^2$, the inequality holds
$$\beta_2(D_{11}^2+D_{22}^2)+(\beta_1+\beta_2+\beta_3)(D_{11}+D_{22})^2\ge0$$
for all $D_{11}$ and $D_{22}$, if and only if $\frac{3}{2}\beta_2+\beta_3+\beta_1\ge0$.
\end{proof}

In \cite{WZZ}, we show that if the Ericksen-Leslie system is derived from the Doi-Onsager equation, then
the energy (\ref{eq:energy law}) is indeed dissipated. Let us make it precise. The nondimensional Doi-Onsager equation takes as follows
\begin{eqnarray}\label{eq:Doi-Onsager}
\left\{
\begin{split}
&\frac{\pa{f^\ve}}{\pa{t}}+\vv^\ve\cdot\nabla{f^\ve}=\frac{1}{\ve}\CR\cdot(\CR{f^\ve}+f^\ve
\CR\CU_\ve f^\ve)
-\CR\cdot(\mm\times\kappa^\ve\cdot\mm{f^\ve}),\\
&\frac{\pa{\vv^\ve}}{\pa{t}}+\vv^\ve\cdot\nabla\vv^\ve=-\nabla{p^\ve}+\frac{\gamma}{Re}\Delta\vv^\ve
+\frac{1-\gamma}{2Re}\nabla\cdot\big(\DD^\ve:\langle\mm\mm\mm\mm\rangle_{f^\ve}\big)\\
&\qquad\qquad\qquad\qquad+\frac{1-\gamma}{\ve
Re}(\nabla\cdot\tau^e_\ve+\FF^e_\ve),\label{eq:LCP-nonL-v}
\end{split}\right.
\end{eqnarray}
where $\ve$ is the Deborah number, $\kappa^\ve=(\na v^\ve)^T,
\DD^\ve=\f12\big(\kappa^\ve+(\kappa^\ve)^T\big)$, and \beno
&&\tau^e_\ve=-\langle\mm\mm\times\CR\mue\rangle_{f^\ve},\quad
\FF^e_\ve=-\langle\nabla\mue\rangle_{f^\ve},\quad \mue=\ln f^\ve+\CU_\ve f,\\
&&\CU_\ve
f=\alpha\int_{\BR}\int_{\BS}|\mm\times\mm'|^2\frac{1}{\sqe^{3}}g\big(\frac{\xx-\xx'}{\sqe}\big)f(\xx',\mm',t)\ud\mm'\ud\xx'.
\eeno
When $\ve$ is small, the solution $(f^\ve,\vv^\ve)$ of the system (\ref{eq:Doi-Onsager}) has the expansion
\beno
&&f^\ve=f_0(\mm\cdot\nn)+\ve f_1+\cdots,\\
&&\vv^\ve=\vv_0+\ve\vv_1+\cdots, \eeno
where $(\vv_0,\nn)$ is determined by (\ref{eq:EL}) with the Leslie coefficients given by
\ben\label{Leslie cofficients}
\begin{split}
&\alpha_1=-\frac{S_4}{2},\quad\alpha_2=-\frac{1}{2}\big(1+\frac{1}{\lambda}\big)S_2,\quad\alpha_3=-\frac{1}{2}\big(1-\frac{1}{\lambda}\big)S_2,
\\
&\alpha_4=\frac{4}{15}-\frac{5}{21}S_2-\frac{1}{35}S_4,\quad
\alpha_5=\frac{1}{7}S_4+\frac{6}{7}S_2,\quad
\alpha_6=\frac{1}{7}S_4-\frac{1}{7}S_2.
\end{split}
\een
Here $S_2=\langle{P}_2(\mm\cdot\nn)\rangle_{h_{\eta_1,\nn}}$,
$S_4=\langle{P}_4(\mm\cdot\nn)\rangle_{h_{\eta_1,\nn}}$ with
$P_k(x)$ the $k$-th Legendre polynomial and
$$h_{\eta_1,\nn}(\mm)=\frac{\ue^{\eta_1(\mm\cdot\nn)^2}}{\int_\BS\ue^{\eta_1(\mm\cdot\nn)^2}\ud\mm}.$$
Here $\eta_1$ and  $\lambda$ are constants depending only on $\al$. When the Leslie coefficients are given by
(\ref{Leslie cofficients}), we show that the dissipation relation (\ref{eq:dissipation}) holds;
see \cite{WZZ, KD, EZ} for the details.

\setcounter{equation}{0}
\section{A New formulation of the Ericksen-Leslie system}

Set  $\mu_1=\frac{1}{\gamma_1}, \mu_2=-\frac{\gamma_2}{\gamma_1} $.
The third equation of (\ref{eq:EL}) is equivalent to
\begin{align}
\nn_t+\vv\cdot\nabla\nn+\BOm\cdot\nn-
(\II-\nn\nn)\cdot(\mu_1\hh+\mu_2\DD\cdot\nn)=0,\nonumber
\end{align}
which can be written as
\begin{align}\label{eq:director-new}
\nn_t+\vv\cdot\nabla\nn+
\nn\times\big((\BOm\cdot\nn-\mu_1\hh-\mu_2\DD\cdot\nn)\times\nn\big)=0.
\end{align}
Substituting them into (\ref{eq:Leslie stress}),  we get
\begin{align}\nonumber
\sigma^L=&\beta_1(\nn\nn:\DD)\nn\nn-\frac{1}{2}(1+\mu_2)\nn(\II-\nn\nn)\cdot\hh
+\frac{1}{2}(1-\mu_2)(\II-\nn\nn)\cdot\hh\nn\nonumber\\
&+\beta_2\DD+\frac{\beta_3}{2}(\nn\DD\cdot\nn+\DD\cdot\nn\nn)\nonumber\\
=&\beta_1(\nn\nn:\DD)\nn\nn-\frac{1}{2}(1+\mu_2)\nn\nn\times(\hh\times\nn)
+\frac{1}{2}(1-\mu_2)\nn\times(\hh\times\nn)\nn\nonumber\\
&+\beta_2\DD+\frac{\beta_3}{2}(\nn\DD\cdot\nn+\DD\cdot\nn\nn).\label{eq:Leslie stress-new}
\end{align}

With the new formulation (\ref{eq:director-new}) and (\ref{eq:Leslie stress-new}),  we can derive the same energy law (\ref{eq:energy law})  without using
the constrain $|\nn|=1$.  To see it, we need the following important cancelation relations.

\begin{Lemma}\label{lem:cancelation}
It holds that
\begin{align*}
&\big(-\frac{1}{2}\nn\nn\times(\hh\times\nn)
+\frac{1}{2}\nn\times(\hh\times\nn)\nn\big):(\DD+\BOm)
-\big((\BOm\cdot\nn)\times\nn\big)\cdot\big(\hh\times\nn\big)=0,\\
&-\big(\frac{1}{2}\nn\nn\times(\hh\times\nn)
+\frac{1}{2}\nn\times(\hh\times\nn)\nn\big):(\DD+\BOm)
+(\hh\times\nn)\cdot\big((\DD\cdot\nn)\times\nn\big)=0.
\end{align*}

\end{Lemma}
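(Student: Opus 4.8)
The plan is to reduce both identities to elementary scalar-triple-product manipulations, using only the symmetry of $\DD$ and the antisymmetry of $\BOm$; notably, no use of the constraint $|\nn|=1$ is needed, which is exactly what makes the new formulation workable. I would first set $\mathbf{p}=\hh\times\nn$ and $\mathbf{q}=\nn\times(\hh\times\nn)=\nn\times\mathbf{p}$, so that the two tensors in the statement are the outer products $\nn\mathbf{q}$ and $\mathbf{q}\nn$. The basic tool is the contraction identity $(\mathbf{a}\mathbf{b}):M=\mathbf{a}\cdot(M\cdot\mathbf{b})$, valid for any vectors $\mathbf{a},\mathbf{b}$ and any matrix $M$, together with $\DD^{T}=\DD$ and $\BOm^{T}=-\BOm$, which give $\nn\cdot(\DD\cdot\mathbf{q})=\mathbf{q}\cdot(\DD\cdot\nn)$ and $\nn\cdot(\BOm\cdot\mathbf{q})=-\mathbf{q}\cdot(\BOm\cdot\nn)$.

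For the first identity, contracting $-\frac{1}{2}\nn\mathbf{q}+\frac{1}{2}\mathbf{q}\nn$ with $\DD+\BOm$ yields $-\frac{1}{2}\nn\cdot((\DD+\BOm)\cdot\mathbf{q})+\frac{1}{2}\mathbf{q}\cdot((\DD+\BOm)\cdot\nn)$. Here the symmetric ($\DD$) parts cancel while the antisymmetric ($\BOm$) parts reinforce, leaving precisely $\mathbf{q}\cdot(\BOm\cdot\nn)$. It then suffices to verify $\mathbf{q}\cdot(\BOm\cdot\nn)=\big((\BOm\cdot\nn)\times\nn\big)\cdot(\hh\times\nn)$; writing $\mathbf{b}=\BOm\cdot\nn$ this reads $(\nn\times\mathbf{p})\cdot\mathbf{b}=(\mathbf{b}\times\nn)\cdot\mathbf{p}$, and both sides equal the scalar triple product $\mathbf{b}\cdot(\nn\times\mathbf{p})$ by cyclic invariance, which closes the first identity.

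For the second identity, contracting $-\frac{1}{2}\nn\mathbf{q}-\frac{1}{2}\mathbf{q}\nn$ with $\DD+\BOm$ now makes the antisymmetric parts cancel and doubles the symmetric ones, leaving $-\mathbf{q}\cdot(\DD\cdot\nn)$. Setting $\mathbf{c}=\DD\cdot\nn$, I would then check $\mathbf{q}\cdot\mathbf{c}=(\hh\times\nn)\cdot(\mathbf{c}\times\nn)$, i.e. $(\nn\times\mathbf{p})\cdot\mathbf{c}=\mathbf{p}\cdot(\mathbf{c}\times\nn)$, and again both sides equal $\mathbf{c}\cdot(\nn\times\mathbf{p})$. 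The whole argument is routine; the only care needed is to parse the notation correctly, distinguishing the outer products $\nn\mathbf{q}$, $\mathbf{q}\nn$ from the cross products inside them and keeping the index order in $(\mathbf{a}\mathbf{b}):M$ consistent, and to track which half of $\DD+\BOm$ survives each sign pattern. I expect no substantive obstacle beyond this bookkeeping.
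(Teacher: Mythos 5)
Your proof is correct and takes essentially the same route as the paper: both arguments exploit the antisymmetry of $-\frac{1}{2}\nn\,\nn\times(\hh\times\nn)+\frac{1}{2}\nn\times(\hh\times\nn)\,\nn$ (so only the $\BOm$ contraction survives in the first identity) and the symmetry of the corresponding sum (so only the $\DD$ contraction survives in the second), and then finish with the cyclic invariance of the scalar triple product. The only difference is presentational: the paper writes the two reductions as direct two-line computations, while you organize the same steps through the auxiliary vectors $\mathbf{p}$, $\mathbf{q}$ and the contraction identity $(\mathbf{a}\mathbf{b}):M=\mathbf{a}\cdot(M\cdot\mathbf{b})$.
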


\begin{proof} Direct calculations show that
\begin{align*}
&\big(-\frac{1}{2}\nn\nn\times(\hh\times\nn)
+\frac{1}{2}\nn\times(\hh\times\nn)\nn\big):(\DD+\BOm)
-\big((\BOm\cdot\nn)\times\nn\big)\cdot\big(\hh\times\nn\big)\nonumber\\
&=\big(\nn\times(\hh\times\nn)\nn\big):\BOm
-\big((\BOm\cdot\nn)\times\nn\big)\cdot\big(\hh\times\nn\big)\nonumber\\
&=\big(\nn\times(\hh\times\nn)\big)\cdot(\BOm\cdot\nn)
-\big((\BOm\cdot\nn)\times\nn\big)\cdot\big(\hh\times\nn\big)=0,
\end{align*}
and
\begin{align*}
&-\big(\frac{1}{2}\nn\nn\times(\hh\times\nn)
+\frac{1}{2}\nn\times(\hh\times\nn)\nn\big):(\DD+\BOm)
+(\hh\times\nn)\cdot\big((\DD\cdot\nn)\times\nn\big)\nonumber\\
&=-\big(\nn\times(\hh\times\nn)\nn\big):\DD
+\big((\DD\cdot\nn)\times\nn\big)\cdot(\hh\times\nn)\nonumber\\
&=-\big(\nn\times(\hh\times\nn)\big)\cdot(\DD\cdot\nn)
+\big((\DD\cdot\nn)\times\nn\big)\cdot(\hh\times\nn)=0.
\end{align*}
The proof is finished.
\end{proof}

Now we derive the energy law (\ref{eq:energy law})  by using (\ref{eq:director-new}) and (\ref{eq:Leslie stress-new}), since the derivation will be helpful
to understand the energy estimates in the next section.  Thanks to  (\ref{eq:EL}) and (\ref{eq:director-new}),  we have
\begin{align*}
&-\frac{1}{2}\frac{\ud}{\ud{t}}\int_{\BR}\frac{Re}{1-\gamma}|\vv|^2+|\nabla\nn|^2\ud\xx
=-\int_{\BR}\frac{Re}{1-\gamma}\vv\cdot\vv_t-\Delta\nn\cdot\nn_t\ud\xx\\
&=\int_{\BR}\frac{\gamma}{1-\gamma}|\nabla\vv|^2+(\sigma^L+\sigma^E):\nabla\vv-(\vv\cdot\nabla\nn)\cdot\hh\\
&\qquad+\big(\nn\times\big((\mu_1\hh+\mu_2\DD\cdot\nn-\BOm\cdot\nn)\times\nn\big)\big)\cdot\hh\ud\xx\\
&=\int_{\BR}\frac{\gamma}{1-\gamma}|\nabla\vv|^2+\mu_1|\hh\times\nn|^2+\sigma^E:\nabla\vv-(\vv\cdot\nabla\nn)\cdot\hh\\
&\qquad+\sigma^L:\nabla\vv-\big((\BOm\cdot\nn)\times\nn\big)\cdot\big(\hh\times\nn\big)
+\mu_2(\hh\times\nn)\cdot\big((\DD\cdot\nn)\times\nn\big)\ud\xx.
\end{align*}
For the Ericksen stress term, we have
\begin{align*}
&\int_{\BR}\sigma^E:\nabla\vv-(\vv\cdot\nabla\nn)\cdot\hh\ud\xx
=\int_\BR-\partial_in_k\partial_jn_k\partial_iv_j-v_j\partial_jn_k\partial_{ii}n_k\ud\xx\\
&\quad=\int_\BR{v}_j\partial_j\partial_in_k\partial_in_k-\partial_i(v_j\partial_jn_k\partial_{i}n_k)\ud\xx=0,
\end{align*}
while for the Leslie stress term,  we  get by (\ref{eq:Leslie stress-new}) and Lemma \ref{lem:cancelation}  that
\begin{align*}
&\int_{\BR}\sigma^L:\nabla\vv-\big((\BOm\cdot\nn)\times\nn\big)\cdot\big(\hh\times\nn\big)
+\mu_2(\hh\times\nn)\cdot\big((\DD\cdot\nn)\times\nn\big)\ud\xx\\
&=\int_{\BR}\Big(\beta_1(\nn\nn:\DD)\nn\nn+\frac{1}{2}(-1-\mu_2)\nn\nn\times(\hh\times\nn)
+\frac{1}{2}(1-\mu_2)\nn\times(\hh\times\nn)\nn+\beta_2\DD\\
&\qquad+\frac{\beta_3}{2}(\nn\DD\cdot\nn+\DD\cdot\nn\nn)\Big):(\DD+\BOm)-\big((\BOm\cdot\nn)\times\nn\big)\cdot\big(\hh\times\nn\big)
+\mu_2(\hh\times\nn)\cdot\big((\DD\cdot\nn)\times\nn\big)\ud\xx\\
&=\int_{\BR}\beta_1(\nn\nn:\DD)^2+\beta_2\DD:\DD+\beta_3|\DD\cdot\nn|^2-\big((\BOm\cdot\nn)\times\nn\big)\cdot\big(\hh\times\nn\big)
+\mu_2(\hh\times\nn)\cdot\big((\DD\cdot\nn)\times\nn\big)\\
&\qquad+\Big(\frac{1}{2}(-1-\mu_2)\nn\nn\times(\hh\times\nn)
+\frac{1}{2}(1-\mu_2)\nn\times(\hh\times\nn)\nn\Big):(\DD+\BOm)\ud\xx\\
&=\int_{\BR}\beta_1(\nn\nn:\DD)^2+\beta_2\DD:\DD+\beta_3|\DD\cdot\nn|^2\ud\xx.
\end{align*}
Then the energy law (\ref{eq:energy law})  follows from the above identities.

Although the energy law can be derived without using the property $|\nn|=1$, this property is vital for the dissipation relation (\ref{eq:dissipation})
under the condition (\ref{Leslie condition}).
Hence, it is important to construct an approximate system preserving  the energy law and $|\nn|=1$
in order to prove the local well-posedness of (\ref{eq:EL}). It is usually difficult. For this, we introduce a modified
stress tensor so that the energy is still dissipated for the modified system under the condition (\ref{Leslie condition}).
The modified Leslie stress tensor takes the form
\begin{align}\nonumber
\widetilde\sigma^L=&\beta_1(\nn\nn:\DD)\nn\nn+\frac{1}{2}(-1-\mu_2)\nn\nn\times(\hh\times\nn)
+\frac{1}{2}(1-\mu_2)\nn\times(\hh\times\nn)\nn\\
&+\beta_2|\nn|^4\DD+\frac{\beta_3}{2}|\nn|^2(\nn\DD\cdot\nn+\DD\cdot\nn\nn).\non
\end{align}
It is obvious that $\widetilde\sigma^L=\sigma^L$ if $|\nn|=1$.
An important fact is that  for any traceless symmetric $\DD$ and vector $\nn$ (not necessary unit),
it still holds
\begin{align}\label{dissipate relation}
\big\langle\beta_1(\nn\nn:\DD)\nn\nn+\beta_2|\nn|^4\DD+\frac{\beta_3}{2}|\nn|^2
(\nn\DD\cdot\nn+\DD\cdot\nn\nn),\DD\big\rangle\ge0
\end{align}
under the condition (\ref{Leslie condition}). We denote
\begin{align*}
&\sigma_1(\vv,\nn)=\beta_1(\nn\nn:\DD)\nn\nn+\beta_2|\nn|^4\DD+\frac{\beta_3}{2}|\nn|^2
(\nn\DD\cdot\nn+\DD\cdot\nn\nn),\\
&\sigma_2(\nn)=\frac{1}{2}(-1-\mu_2)\nn\nn\times(\hh\times\nn)
+\frac{1}{2}(1-\mu_2)\nn\times(\hh\times\nn)\nn.
\end{align*}
The reformulated new system takes
\ben\label{eq:EL-new}
\left\{
\begin{split}
&\vv_t+\vv\cdot\nabla\vv=-\nabla{p}+\nu\Delta\vv
+\nabla\cdot\big(\sigma_1(\vv,\nn)+\sigma_2(\nn)+\sigma^E\big),\\
&\nn_t+\vv\cdot\nabla\nn+
\nn\times\big((\BOm\cdot\nn-\mu_1\hh-\mu_2\DD\cdot\nn)\times\nn\big)=0.
\end{split}\right.
\een
Here we set $\nu=\frac \gamma {Re}$  and take $\frac {1-\gamma} {Re}=1$.
Similar to Proposition \ref{prop:energy law}, we can show that the system (\ref{eq:EL-new})  obeys the following energy-dissipation law:
\begin{align}
\frac{1}{2}\frac{\ud}{\ud{t}}\int_{\BR}|\vv|^2+|\nabla\nn|^2\ud\xx
=-&\int_{\BR}\Big(\nu|\nabla\vv|^2+\beta_1|\DD:\nn\nn|^2
+\beta_2|\nn|^4\DD:\DD\nonumber\\
&\quad+\beta_3|\nn|^2|\DD\cdot\nn|^2
+\mu_1|\nn\times\hh|^2\Big)\ud\xx,
\end{align}
which is dissipated under the condition (\ref{Leslie condition})  by (\ref{dissipate relation}).

\section{Local well-posedness and blow-up criterion}
This section is devoted to proving the local well-posedness of the system (\ref{eq:EL}).
The following lemma will frequently used.
\begin{Lemma}\label{lem:product}
For any $\al,\beta\in \BN$, it hods that
\begin{align}
&\|D^\al(fg)\|_{L^2}\le C\sum_{|\gamma|=|\al|}\big(\|f\|_{L^\infty}\|D^\gamma g\|_{L^2}+\|g\|_{L^\infty}\|D^\gamma f\|_{L^2}\big),\non\\
&\big\|[D^\al,f]D^\beta g\big\|_{L^2}
\le C\Big(\sum_{|\gamma|=|\al|+|\beta|}\|D^{\gamma}f\|_{L^2}\|g\|_{L^\infty}
+\sum_{|\gamma|=|\al|+|\beta|-1}\|\na f\|_{L^\infty}\|D^{\gamma}g\|_{L^2}\Big).\non
\end{align}
\end{Lemma}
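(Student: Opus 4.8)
The plan is to derive both estimates from the Leibniz rule together with the Gagliardo--Nirenberg interpolation inequalities on $\BR$; these are the classical Moser-type product estimate and the Kato--Ponce commutator estimate, and the only work is the bookkeeping of interpolation exponents.

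For the first inequality, I would write $D^\al(fg)=\sum_{\gamma\le\al}\binom{\al}{\gamma}D^\gamma f\,D^{\al-\gamma}g$ by the Leibniz rule and estimate each summand in $L^2$ by H\"older's inequality. Writing $m=|\al|$ and $|\gamma|=j$ with $0\le j\le m$, I would pair $D^\gamma f$ and $D^{\al-\gamma}g$ in the conjugate exponents $\frac{2m}{j}$ and $\frac{2m}{m-j}$ (for which $\frac{j}{2m}+\frac{m-j}{2m}=\frac12$), and interpolate each factor by Gagliardo--Nirenberg, namely $\|D^\gamma f\|_{L^{2m/j}}\le C\|f\|_{L^\infty}^{1-j/m}\big(\sum_{|\delta|=m}\|D^\delta f\|_{L^2}\big)^{j/m}$ and the analogous bound for $g$. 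Multiplying the two and applying Young's inequality $a^{1-\theta}b^\theta\le a+b$ with $\theta=j/m$ collapses every summand onto the two endpoints $j=0$ and $j=m$ (which are handled directly by H\"older as $L^\infty\times L^2$), and these are precisely the two terms on the right-hand side.

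For the commutator estimate, the decisive point is that the top-order term cancels. Writing $[D^\al,f]D^\beta g=D^\al(f\,D^\beta g)-f\,D^\al D^\beta g$ and expanding the first term by Leibniz, the $\gamma=0$ contribution equals $f\,D^\al D^\beta g$ and is removed by the subtracted term, leaving $[D^\al,f]D^\beta g=\sum_{0<\gamma\le\al}\binom{\al}{\gamma}D^\gamma f\,D^{\al+\beta-\gamma}g$, in which every summand carries at least one derivative on $f$. With $N=|\al|+|\beta|$ and $|\gamma|=j$, $1\le j\le|\al|$, I would again apply H\"older's inequality, now interpolating $D^\gamma f$ (of order $j$) between $\|\na f\|_{L^\infty}$ and $\sum_{|\delta|=N}\|D^\delta f\|_{L^2}$ with weight $\theta_f=\frac{j-1}{N-1}$, and $D^{\al+\beta-\gamma}g$ (of order $N-j$) between $\|g\|_{L^\infty}$ and $\sum_{|\delta|=N-1}\|D^\delta g\|_{L^2}$ with weight $\theta_g=\frac{N-j}{N-1}$. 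Since $\theta_f+\theta_g=1$, Young's inequality reduces each summand to $\|g\|_{L^\infty}\sum_{|\gamma|=N}\|D^\gamma f\|_{L^2}+\|\na f\|_{L^\infty}\sum_{|\gamma|=N-1}\|D^\gamma g\|_{L^2}$, and summing over the finitely many $\gamma$ finishes the proof; the degenerate case $N\le1$ is trivial and treated directly.

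The only real obstacle is the exponent bookkeeping: one must verify that for each admissible $j$ the Gagliardo--Nirenberg exponents are legitimate, that the H\"older pairing $\frac{1}{p_f}+\frac{1}{p_g}=\frac12$ is exactly the identity $\theta_f+\theta_g=1$, and that Young's inequality assigns the powers so that the surviving endpoints match the stated right-hand sides. There is no genuine analytic difficulty beyond this routine counting.
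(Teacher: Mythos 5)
Your proof is correct, but it takes a genuinely different route from the paper's. The paper gives no written-out proof at all: it disposes of the lemma in one line, remarking that it "can be easily proved by using Bony's decomposition" with a citation to Bahouri--Chemin--Danchin, i.e., it appeals to Littlewood--Paley theory and paraproduct calculus in frequency space. Your argument is instead the classical physical-space proof of Moser/Kato--Ponce type, and the bookkeeping you outline does close: in the product estimate, H\"older with exponents $2m/j$ and $2m/(m-j)$ matches the Gagliardo--Nirenberg interpolation $\|D^{\gamma}u\|_{L^{2m/j}}\le C\|u\|_{L^\infty}^{1-j/m}\big(\sum_{|\delta|=m}\|D^{\delta}u\|_{L^2}\big)^{j/m}$ (the exceptional case of Nirenberg's theorem occurs only at the endpoint $\theta=1$, where the inequality is trivial), and Young's inequality with weight $j/m$ collapses each cross term onto the two stated endpoints; in the commutator estimate, the key cancellation of the $\gamma=0$ term is correctly identified, and interpolating $D^{\gamma}f$ through $\nabla f$ with weight $\theta_f=(j-1)/(N-1)$ against $D^{\alpha+\beta-\gamma}g$ with weight $\theta_g=(N-j)/(N-1)$ gives exactly the H\"older pairing $\theta_f/2+\theta_g/2=1/2$, so Young again produces the two stated endpoint terms; the degenerate cases $N\le 1$ and $j\in\{1,N\}$ are harmless, as you say. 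The trade-off between the two routes: your argument is elementary and self-contained, using only the Leibniz rule and classical interpolation inequalities on $\mathbb{R}^3$, but it is rigidly tied to integer-order derivatives and the $L^2$--$L^\infty$ duality of exponents; the Bony decomposition route that the paper invokes requires the Littlewood--Paley machinery but handles fractional regularity, general Besov/Sobolev exponents, and endpoint refinements uniformly, which is why it is the standard reference proof and why the authors felt entitled to cite it rather than reproduce it.
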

This lemma can be easily proved by using Bony's decomposition; see \cite{Bah} for example.
The proof of Theorem \ref{thm:local} is split into several steps.\vspace{0.1cm}

{\bf Step 1.}\, Construction of the approximate solutions

The construction is based on the classical Friedrich's method.
Define the smoothing operator
\begin{align}
\CJ_\ve{f}=\mathcal{F}^{-1}(\mathbf{1}_{|\xi|{\le}\frac{1}{\ve}}\mathcal{F}f),
\end{align}
where $\mathcal{F}$ is the usual Fourier transform.
Let $\BP$ be the operator which projects a vector field to its solenoidal part.
We introduce the following approximate system of (\ref{eq:EL-new}):
\beno
\left\{
\begin{split}
&\frac{\pa{\vv_\ve}}{\pa{t}}+\CJ_\ve\mathbb{P}(\CJ_\ve\vv_\ve\cdot\nabla\CJ_\ve\vv_\ve)=
\nu\Delta\CJ_\ve\vv_\ve+\nabla\cdot\CJ_\ve\mathbb{P}\big(\sigma_1(\CJ_\ve\vv_\ve,\CJ_\ve\nn_\ve)
+\sigma_2(\CJ_\ve\nn_\ve)+\sigma^E(\CJ_\ve\nn_\ve)\big),\\\label{EL-final-n-approx}
&\frac{\pa\nn_\ve}{\pa{t}}+\CJ_\ve\Big(\CJ_\ve\vv_\ve\cdot\nabla\CJ_\ve\nn_\ve+
\CJ_\ve\nn_\ve\times\big[(\CJ_\ve\BOm_\ve\cdot\CJ_\ve\nn_\ve-\mu_1\CJ_\ve\hh_\ve
-\mu_2\CJ_\ve\DD_\ve\cdot\CJ_\ve\nn_\ve)\times\CJ_\ve\nn_\ve\big]\Big)=0,\\
&(\vv_\ve,\nn_\ve)|_{t=0}=(\CJ_\ve\vv_0,\CJ_\ve\nn_0).
\end{split}\right.
\eeno
The above system can be viewed as an ODE system on $L^2(\BR)$. Then we know by the Cauchy-Lipschitz
theorem that there exist a strictly maximal time $T_\ve$ and a unique
solution $(\vv_\ve,\nn_\ve)$ which is continuous in time with value in $H^k(\BR)$ for any $k\ge 0$.
As $\CJ^2_\ve=\CJ_\ve$, we know that $(\CJ_\ve\vv_\ve, \CJ_\ve\nn_\ve)$ is
also a solution. Therefore, $(\vv_\ve,\nn_\ve)=(\CJ_\ve\vv_\ve, \CJ_\ve\nn_\ve)$.
Thus,  $(\vv_\ve,\nn_\ve)$ satisfies the following system
\ben\label{eq:EL-app}
\left\{
\begin{split}
&\frac{\pa{\vv_\ve}}{\pa{t}}+\CJ_\ve\mathbb{P}(\vv_\ve\cdot\nabla\vv_\ve)=
\nu\Delta\vv_\ve+\nabla\cdot\CJ_\ve\mathbb{P}\big(\sigma_1(\vv_\ve,\nn_\ve)
+\sigma_2(\nn_\ve)+\sigma^E(\nn_\ve)\big),\\
&\frac{\pa\nn_\ve}{\pa{t}}+\CJ_\ve\Big(\vv_\ve\cdot\nabla\nn_\ve+
\nn_\ve\times\big[(\BOm_\ve\cdot\nn_\ve-\mu_1\hh_\ve
-\mu_2\DD_\ve\cdot\nn_\ve)\times\nn_\ve\big]\Big)=0,\\
&(\vv_\ve,\nn_\ve)|_{t=0}=(\CJ_\ve\vv_0,\CJ_\ve\nn_0).
\end{split}\right.
\een

{\bf Step 2.} Uniform energy estimates

We define
\begin{align}
E_s(\vv,\nn)\eqdefa\|\nn-\nn_0\|_{L^2}^2+\|\nabla\nn\|_{L^2}^2+\|\nabla\Delta^s\nn\|_{L^2}^2
+\|\vv\|_{L^2}^2+\|\Delta^s\vv\|_{L^2}^2.\non
\end{align}
First of all, we get by the second equation of (\ref{eq:EL-app})  that
\begin{align}\label{eq:n-L2}
&\frac{\ud}{\ud{t}}\|\nn_\ve-\nn_0\|_{L^2}^2=2\big\langle
\p_t\nn_\ve,\nn_\ve-\nn_0\big\rangle\nonumber\\
&=\big\langle\vv_\ve\cdot\nabla\nn_\ve+\nn_\ve\times
\big[(\BOm_\ve\cdot\nn_\ve-\mu_1\hh_\ve
-\mu_2\DD_\ve\cdot\nn_\ve)\times\nn_\ve\big],\CJ_\ve(\nn_\ve-\nn_0)\big\rangle\nonumber\\
&=\big\langle \vv_\ve\cdot\nabla\nn_0+\nn_\ve\times
\big[(\BOm_\ve\cdot\nn_\ve-\mu_1\hh_\ve-\mu_2\DD_\ve\cdot\nn_\ve)\times\nn_\ve\big],\CJ_\ve(\nn_\ve-\nn_0)\big\rangle\non\\
&\le C\big[\|\na \nn_0\|_{L^\infty}\|\vv_\ve\|_{L^2}+\|\nn_\ve\|_{L^\infty}^2\big(\|\nn_\ve\|_{L^\infty}\|\na\vv_\ve\|_{L^2}+\|\Delta\nn_\ve\|_{L^2}\big)\big]\|\nn_\ve-\nn_0\|_{L^2}\non\\
&\le C\big(\|\na \nn_0\|_{L^\infty}+\|\nn_\ve\|_{L^\infty}^2+\|\nn_\ve\|_{L^\infty}^3\big)E_s(\vv_\ve,\nn_\ve).
\end{align}
The following energy law still holds for the approximate system (\ref{eq:EL-app}):
\begin{align}\label{eq:energy law-app}
\frac{1}{2}\frac{\ud}{\ud{t}}\int_{\BR}|\vv_\ve|^2+|\nabla\nn_\ve|^2\ud\xx
=&-\int_{\BR}\Big(\nu|\nabla\vv_\ve|^2+\beta_1|\DD_\ve:\nn_\ve\nn_\ve|^2
+\beta_2|\nn_\ve|^4\DD_\ve:\DD_\ve\nonumber\\
&\qquad+\beta_3|\nn_\ve|^2|\DD_\ve\cdot\nn_\ve|^2+\mu_1|\nn_\ve\times\hh_\ve|^2\Big)\ud\xx.
\end{align}

Now we turn to the estimate of the higher order derivative for $\nn_\ve$.
\begin{align}\label{eq:energy-n}
\frac{1}{2}\frac{\ud}{\ud{t}}\big\langle\nabla\Delta^s\nn_\ve, \nabla\Delta^s\nn_\ve\big\rangle
&=\big\langle\Delta^s(\vv_\ve\cdot\nabla\nn_\ve),\Delta^{s+1}\nn_\ve\big\rangle
+\big\langle\Delta^s\big[\nn_\ve\times\big((\BOm_\ve\cdot\nn_\ve)\times\nn_\ve\big)\big],\Delta^{s+1}\nn_\ve\big\rangle\nonumber\\
&\quad-\mu_2\big\langle\Delta^s\big[\nn_\ve\times\big((\DD_\ve\cdot\nn_\ve)\times\nn_\ve\big)\big],\Delta^{s+1}\nn_\ve\big\rangle
-\mu_1\big\langle\Delta^s\big[\nn_\ve\times\big(\Delta\nn_\ve\times\nn_\ve\big)\big],\Delta^{s+1}\nn_\ve\big\rangle\nonumber\\
&=I_1+I_2+I_3+I_4.
\end{align}
As $\na \cdot \vv_\ve=0$, we  get by Lemma \ref{lem:product} that
\begin{align}\label{eq:energy-n1}
I_1&=-\big\langle\nabla\Delta^s(\vv_\ve\cdot\nabla\nn_\ve),\nabla\Delta^{s}\nn_\ve\big\rangle
+\big\langle\vv_\ve\cdot\nabla(\nabla\Delta^s\nn_\ve),\nabla\Delta^{s}\nn_\ve\big\rangle\non\\
&=-\big\langle[\nabla\Delta^s, \vv_\ve]\cdot\nabla\nn_\ve,\nabla\Delta^{s}\nn_\ve\big\rangle\non\\
&\le \|[\nabla\Delta^s, \vv_\ve]\cdot\nabla\nn_\ve\|_{L^2}\|\nabla\Delta^{s}\nn_\ve\|_{L^2}\non\\
&\le C\big(\|\na \nn_\ve\|_{H^{2s}}\|\na\vv_\ve\|_{L^\infty}+\|\na \vv_\ve\|_{H^{2s}}\|\na \nn_\ve\|_{L^\infty}\big) \|\nabla\Delta^{s}\nn_\ve\|_{L^2}\non\\
&\le C_\delta\big(\|\na\vv_\ve\|_{L^\infty}+\|\na \nn_\ve\|_{L^\infty}^2\big) \|\na \nn_\ve\|_{H^{2s}}^2+\delta\|\na \vv_\ve\|_{H^{2s}}^2.
\end{align}
Here and in what follows,  $\delta$ denotes a positive constant to be determined later.  We rewrite $I_2$ as
\begin{align}
I_2=&\big\langle\nn_\ve\times\big((\Delta^s\BOm_\ve\cdot\nn_\ve)\times\nn_\ve\big),\Delta^{s+1}\nn_\ve\big\rangle\non\\
&-\big\langle\na\Delta^s\big[\nn_\ve\times\big((\BOm_\ve\cdot\nn_\ve)\times
\nn_\ve\big)\big], \na\Delta^{s}\nn_\ve\big\rangle+\big\langle\nn_\ve\times\big((\na\Delta^s\BOm_\ve\cdot\nn_\ve)\times\nn_\ve\big), \na\Delta^{s}\nn_\ve\big\rangle\non\\
&+\big\langle (\na\nn_\ve)\times\big((\Delta^s\BOm_\ve\cdot\nn_\ve)\times\nn_\ve\big), \na\Delta^{s}\nn_\ve\big\rangle
+\big\langle\nn_\ve\times\big((\Delta^s\BOm_\ve\cdot(\na\nn_\ve))\times\nn_\ve\big), \na\Delta^{s}\nn_\ve\big\rangle\non\\
&+\big\langle\nn_\ve\times\big((\Delta^s\BOm_\ve\cdot\nn_\ve)\times(\na\nn_\ve)\big),\na\Delta^{s}\nn_\ve\big\rangle,\non
\end{align}
from which and Lemma  \ref{lem:product}, it follows that
\begin{align}\label{eq:energy-n2}
I_2\le& \big\langle\nn_\ve\times\big((\Delta^s\BOm_\ve\cdot\nn_\ve)\times\nn_\ve\big),\Delta^{s+1}\nn_\ve\big\rangle\non\\
&+C_\delta\big(\|\nn_\ve\|_{L^\infty}^2\|\na\vv_\ve\|_{L^\infty}+\|\nn_\ve\|_{L^\infty}^4\|\na\nn_\ve\|_{L^\infty}^2\big)\|\na \nn_\ve\|_{H^{2s}}^2
+\delta\|\na \vv_\ve\|_{H^{2s}}^2.
\end{align}
Similarly, we have
\begin{align}\label{eq:energy-n3}
I_3\le& -\mu_2\big\langle\nn_\ve\times\big((\Delta^s\DD_\ve\cdot\nn_\ve)\times\nn_\ve\big),\Delta^{s+1}\nn_\ve\big\rangle\non\\
&+C_\delta\big(\|\nn_\ve\|_{L^\infty}^2\|\na\vv_\ve\|_{L^\infty}+\|\nn_\ve\|_{L^\infty}^4\|\na\nn_\ve\|_{L^\infty}^2\big)\|\na \nn_\ve\|_{H^{2s}}^2
+\delta\|\na \vv_\ve\|_{H^{2s}}^2.
\end{align}
For $I_4$, we have
\begin{align}
I_4=&-\mu_1\big\langle \nn_\ve\times\big(\Delta^{s+1}\nn_\ve\times\nn_\ve\big), \Delta^{s+1}\nn_\ve\big\rangle\non\\
&-\big[\mu_1\big\langle\Delta^s\big[\nn_\ve\times\big(\Delta\nn_\ve\times\nn_\ve\big)\big],\Delta^{s+1}\nn_\ve\big\rangle
-\mu_1\big\langle\nn_\ve\times\Delta^s\big(\Delta\nn_\ve\times\nn_\ve\big),\Delta^{s+1}\nn_\ve\big\rangle\big]\non\\
&-\big[\mu_1\big\langle\nn_\ve\times\Delta^s\big(\Delta\nn_\ve\times\nn_\ve\big),\Delta^{s+1}\nn_\ve\big\rangle
-\mu_1\big\langle \nn_\ve\times\big(\Delta^{s+1}\nn_\ve\times\nn_\ve\big), \Delta^{s+1}\nn_\ve\big\rangle\big]\non\\
=&\mu_1\big\langle\Delta^{s+1}\nn_\ve\times\nn_\ve, \Delta^{s+1}\nn_\ve\times\nn_\ve\big\rangle+I_{41}+I_{42}.\non
\end{align}
We get by Lemma \ref{lem:product} that
\begin{align}
I_{42}&\le C\|\na\nn_\ve\|_{L^\infty}\|\na\nn_\ve\|_{H^{2s}}\|\Delta^{s+1}\nn_\ve\times\nn_\ve\|_{L^2}\non\\
&\le  C_\delta\|\na\nn_\ve\|_{L^\infty}^2\|\na\nn_\ve\|_{H^{2s}}^2+\delta\|\Delta^{s+1}\nn_\ve\times\nn_\ve\|_{L^2}^2,\non
\end{align}
and
\begin{align}
I_{41}=&\mu_1\big[\big\langle\Delta^s\big[\na\nn_\ve\times\big(\Delta\nn_\ve\times\nn_\ve\big)\big],\na\Delta^{s}\nn_\ve\big\rangle
-\big\langle(\na\nn_\ve)\times\Delta^s\big(\Delta\nn_\ve\times\nn_\ve\big),\na\Delta^{s}\nn_\ve\big\rangle\big]\non\\
&\mu_1\big[\big\langle\Delta^s\big[\nn_\ve\times\na\big(\Delta\nn_\ve\times\nn_\ve\big)\big],\na\Delta^{s}\nn_\ve\big\rangle
-\big\langle\nn_\ve\times\na\Delta^s\big(\Delta\nn_\ve\times\nn_\ve\big),\na\Delta^{s}\nn_\ve\big\rangle\big]\non\\
\le &C\big(\|\na\nn_\ve\|_{L^\infty}\|\Delta^s\big(\Delta\nn_\ve\times\nn_\ve\big)\|_{L^2}+
\|\na\nn_\ve\|_{H^{2s}}\|\Delta\nn_\ve\times\nn_\ve\|_{L^\infty}\big)\|\na\nn_\ve\|_{H^{2s}}\non\\
\le &C\big\{\|\na\nn_\ve\|_{L^\infty}(\|\Delta^{s+1}\nn_\ve\times\nn_\ve\|_{L^2}+\|\na\nn_\ve\|_{L^\infty}\|\na\nn_\ve\|_{H^{2s}})\non\\
&\qquad\quad+\|\na\nn_\ve\|_{H^{2s}}\|\Delta\nn_\ve\|_{L^\infty}\|\nn_\ve\|_{L^\infty}\big\}\|\na\nn_\ve\|_{H^{2s}},\non
\end{align}
which imply that
\begin{align}\label{eq:energy-n4}
I_4\le& -\mu_1\big\langle\Delta^{s+1}\nn_\ve\times\nn_\ve, \Delta^{s+1}\nn_\ve\times\nn_\ve\big\rangle\non\\
&+C_\delta\big(\|\na\nn_\ve\|_{L^\infty}^2+\|\Delta\nn_\ve\|_{L^\infty}\|\nn_\ve\|_{L^\infty}\big)\|\na\nn_\ve\|_{H^{2s}}^2+\delta\|\Delta^{s+1}\nn_\ve\times\nn_\ve\|_{L^2}^2.
\end{align}
Substituting (\ref{eq:energy-n1})-(\ref{eq:energy-n4}) into (\ref{eq:energy-n}), we infer that
\begin{align}\label{eq:energy-nf}
&\frac{1}{2}\frac{\ud}{\ud{t}}\big\langle\nabla\Delta^s\nn_\ve, \nabla\Delta^s\nn_\ve\big\rangle
+\mu_1\big\langle\Delta^{s+1}\nn_\ve\times\nn_\ve, \Delta^{s+1}\nn_\ve\times\nn_\ve\big\rangle\non\\
&\le\big\langle\nn_\ve\times\big((\Delta^s\BOm_\ve\cdot\nn_\ve)\times\nn_\ve\big),\Delta^{s+1}\nn_\ve\big\rangle
-\mu_2\big\langle\nn_\ve\times\big((\Delta^s\DD_\ve\cdot\nn_\ve)\times\nn_\ve\big),\Delta^{s+1}\nn_\ve\big\rangle\non\\
&\quad+C_\delta\big(\|\na\vv_\ve\|_{L^\infty}+\|\na \nn_\ve\|_{L^\infty}^2+\|\Delta\nn_\ve\|_{L^\infty}\|\nn_\ve\|_{L^\infty}\big)\|\na \nn_\ve\|_{H^{2s}}^2\non\\
&\quad+\delta\big(\|\na \vv_\ve\|_{H^{2s}}^2+\|\Delta^{s+1}\nn_\ve\times\nn_\ve\|_{L^2}^2\big).
\end{align}

Next we consider the estimate of the higher order derivative for $\vv_\ve$.
\begin{align}
&\frac{1}{2}\frac{\ud}{\ud{t}}\langle\Delta^s\vv_\ve,\Delta^s\vv_\ve\rangle+\nu\langle\nabla\Delta^s\vv_\ve,\nabla\Delta^s\vv_\ve\rangle\non\\
&=-\big\langle\Delta^s(\vv_\ve\cdot\nabla\vv_\ve),\Delta^s\vv_\ve\big\rangle
+\big\langle\Delta^s(\nabla\nn_\ve\odot\nabla\nn_\ve), \Delta^s\nabla\vv_\ve\big\rangle\non\\
&\quad-\Big\langle\Delta^s\Big(\beta_1(\nn_\ve\nn_\ve:\DD_\ve)\nn_\ve\nn_\ve+\beta_2|\nn_\ve|^4\DD_\ve
+\frac{\beta_3}{2}|\nn_\ve|^2(\nn_\ve\DD_\ve\cdot\nn_\ve+\DD_\ve\cdot\nn_\ve\nn_\ve)\non\\
&\qquad-\frac{1}{2}(1+\mu_2)\nn_\ve\nn_\ve\times(\hh_\ve\times\nn_\ve)
+\frac{1}{2}(1-\mu_2)\nn_\ve\times(\hh_\ve\times\nn_\ve)\nn_\ve\Big),\Delta^s\nabla\vv_\ve\Big\rangle\non\\
&=-\langle\Delta^s(\vv_\ve\cdot\nabla\vv_\ve),\Delta^s\vv_\ve\rangle
+\langle\Delta^s(\nabla\nn_\ve\odot\nabla\nn_\ve), \Delta^s\nabla\vv_\ve\rangle\non\\
&\quad-\Big\langle\Delta^s\big(\beta_1(\nn_\ve\nn_\ve:\DD_\ve)\nn_\ve\nn_\ve+\beta_2|\nn_\ve|^4\DD_\ve
+\frac{\beta_3}{2}|\nn_\ve|^2(\nn_\ve\DD_\ve\cdot\nn_\ve+\DD_\ve\cdot\nn_\ve\nn_\ve)\big),\Delta^s\DD_\ve\Big\rangle\non\\
&\qquad+\mu_2\big\langle\Delta^s\big(\nn_\ve\times(\hh_\ve\times\nn_\ve)\nn_\ve\big),\Delta^s\DD_\ve\big\rangle
-\big\langle\Delta^s(\nn_\ve\times(\hh_\ve\times\nn_\ve)\nn_\ve),\Delta^s\BOm_\ve\big\rangle\non\\
&=II_1+II_2+II_3+II_4+II_5.\non
\end{align}
It follows from Lemma \ref{lem:product} that
\begin{align}
&II_1=\big\langle[\Delta^s,\vv_\ve]\cdot\nabla\vv_\ve,\Delta^s\vv_\ve\big\rangle
\le C\|\na\vv_\ve\|_{L^\infty}\|\vv_\ve\|_{H^{2s}}^2,\non\\
&II_2\le C_\delta\|\na\nn_\ve\|_{L^\infty}^2\|\na\nn_\ve\|_{H^{2s}}^2+\delta\|\na\vv_\ve\|_{H^{2s}}^2,\non
\end{align}
and
\begin{align}
II_4\le&\mu_2\big\langle\nn_\ve\times(\Delta^{s+1}\nn_\ve\times\nn_\ve),\Delta^s\DD_\ve\cdot\nn_\ve\big\rangle\non\\
&+C_\delta\|\nabla\nn_\ve\|_{L^\infty}^2\|\nn_\ve\|_{L^\infty}^4\|\nabla\nn_\ve\|_{H^{2s}}^2+\delta\|\nabla\vv_\ve\|_{H^{2s}}^2,\non\\
II_5\le&-\big\langle\nn_\ve\times(\Delta^{s+1}\nn_\ve\times\nn_\ve),\Delta^s\BOm_\ve\cdot\nn_\ve\big\rangle\non\\
&+C_\delta\|\nabla\nn_\ve\|_{L^\infty}^2\|\nn_\ve\|_{L^\infty}^4\|\nabla\nn_\ve\|_{H^{2s}}^2+\delta\|\nabla\vv_\ve\|_{H^{2s}}^2,\non
\end{align}
and by (\ref{dissipate relation}),
\begin{align}
II_3\le&-\Big\langle\big(\beta_1(\nn_\ve\nn_\ve:\Delta^s\DD_\ve)
\nn_\ve\nn_\ve+\beta_2|\nn_\ve|^4\Delta^s\DD_\ve
+\frac{\beta_3}{2}|\nn_\ve|^2(\nn_\ve\Delta^s\DD_\ve\cdot\nn_\ve
+\Delta^s\DD_\ve\cdot\nn_\ve\nn_\ve)\big),\Delta^s\DD_\ve\Big\rangle\non\\
&+C_\delta\|\nabla\nn_\ve\|_{L^\infty}^2\|\nn_\ve\|_{L^\infty}^6\|\vv_\ve\|_{H^{2s}}^2
+C_\delta\|\vv_\ve\|_{L^\infty}^2\|\nn_\ve\|_{L^\infty}^6\|\na\nn_\ve\|_{H^{2s}}^2
+\delta\|\nabla\vv_\ve\|_{H^{2s}}^2\non\\
\le& C_\delta\|\nn_\ve\|_{L^\infty}^6 \big(\|\nabla\nn_\ve\|_{L^\infty}^2+\|\vv_\ve\|_{L^\infty}^2\big)
\big(\|\vv_\ve\|_{H^{2s}}^2+\|\na\nn_\ve\|_{H^{2s}}^2\big)+\delta\|\nabla\vv_\ve\|_{H^{2s}}^2.\non
\end{align}

Summing up, we conclude that
\begin{align}\label{eq:energy-vf}
&\frac{1}{2}\frac{\ud}{\ud{t}}\langle\Delta^s\vv_\ve,\Delta^s\vv_\ve\rangle+\nu\langle\nabla\Delta^s\vv_\ve,\nabla\Delta^s\vv_\ve\rangle\non\\
&\le\mu_2\big\langle\nn_\ve\times(\Delta^{s+1}\nn_\ve\times\nn_\ve),\Delta^s\DD_\ve\cdot\nn_\ve\big\rangle
-\big\langle\nn_\ve\times(\Delta^{s+1}\nn_\ve\times\nn_\ve),\Delta^s\BOm_\ve\cdot\nn_\ve\big\rangle\non\\
&\quad+C_\delta\big(1+\|\nn_\ve\|_{L^\infty}^6\big)\big(\|\nabla\nn_\ve\|_{L^\infty}^2+\|\na\vv_\ve\|_{L^\infty}+\|\vv_\ve\|_{L^\infty}^2\big)
E_s(\vv_\ve,\nn_\ve)+\delta\|\nabla\vv_\ve\|_{H^{2s}}^2.
\end{align}

Summing up (\ref{eq:n-L2}), (\ref{eq:energy law-app}), (\ref{eq:energy-nf}) and (\ref{eq:energy-vf}),
then taking $\delta$ small enough, we get
\begin{align}\label{eq:energy}
&\frac{1}{2}\frac{\ud}{\ud{t}}E_s(\vv_\ve,\nn_\ve)+\f\nu 2\langle\nabla\vv_\ve,\nabla\vv_\ve\rangle+\f\nu 2\langle\nabla\Delta^s\vv_\ve,\nabla\Delta^s\vv_\ve\rangle\non\\
&\le C\big(1+\|\na\nn_0\|_{L^\infty}+\|\nn_\ve\|_{L^\infty}^6\big)\big(1+\|\nabla\nn_\ve\|_{L^\infty}^2+\|\na\vv_\ve\|_{L^\infty}+\|\vv_\ve\|_{L^\infty}^2\big)
E_s(\vv_\ve,\nn_\ve).
\end{align}

{\bf Step 3.} Existence of the solution

As $s\ge 2$, we deduced from Sobolev embedding and (\ref{eq:energy}) that
\begin{align}
\frac{\ud}{\ud{t}}E_s(\vv_\ve,\nn_\ve)+\nu\langle\nabla\vv_\ve,\nabla\vv_\ve\rangle+\nu\langle\nabla\Delta^s\vv_\ve,\nabla\Delta^s\vv_\ve\rangle
\le {\mathcal F}(E_s(\vv_\ve,\nn_\ve)),\non
\end{align}
where ${\mathcal F}$ is an increasing function with ${\mathcal F}(0)=0$. This implies that there exists $T>0$ depending only on $E_s(\vv_0,\nn_0)$ such that
for any $t\in [0,\min(T,T_\ve)]$,
\beno
E_s(\vv_\ve,\nn_\ve)+\nu\langle\nabla\vv_\ve,\nabla\vv_\ve\rangle+\nu\langle\nabla\Delta^s\vv_\ve,\nabla\Delta^s\vv_\ve\rangle
\le 2E_s(\vv_0,\nn_0),
\eeno
which in turn ensures that $T_\ve\ge T$ by a continuous argument. Thus, we obtain an uniform estimate for the approximate solution on $[0,T]$. Then
the existence of the solution can be deduced by a standard compactness argument. \vspace{0.1cm}

{\bf Step 4.} Uniqueness of the solution

Let $(\vv_1,\nn_1)$ and $(\vv_1,\nn_1)$ be two solutions of the system  (\ref{eq:EL}) with the same initial data. We denote
\beno
\delta_\vv=\vv_1-\vv_2,\quad \delta_\nn=\nn_1-\nn_2,\quad \delta_\hh=\hh_1-\hh_2,\quad \delta_\DD=\DD_1-\DD_2,\quad \delta_\BOm=\BOm_1-\BOm_2.
\eeno
Then $(\delta_\vv, \delta_\nn)$ satisfies
\begin{align*}
&\frac{\pa{\delta_\vv}}{\pa{t}}+\vv_1\cdot\nabla\delta_\vv+\delta_\vv\cdot\nabla\vv_2=-\nabla{p}+\nu\Delta\delta_\vv
+\nabla\cdot\big(\sigma_1(\vv_1,\nn_1)-\sigma_1(\vv_2,\nn_2)\\
&\qquad\qquad+\sigma_2(\nn_1)-\sigma(\nn_2)+\sigma^E(\nn_1)-\sigma^E(\nn_2)\big),\\
&\frac{\pa\delta_\nn}{\pa{t}}+\vv_1\cdot\nabla\delta_\nn+\delta_\vv\cdot\nabla\nn_2
=-\nn_1\times\big((\BOm_1\cdot\nn_1-\mu_1\hh_1-\mu_2\DD_1\cdot\nn_1)\times\nn_1\big)\\
&\qquad\qquad+\nn_2\times\big((\BOm_2\cdot\nn_2-\mu_1\hh_2-\mu_2\DD_2\cdot\nn_2)\times\nn_2\big).
\end{align*}
We make $L^2$ energy estimate for $\delta_\vv$ to get
\begin{align}
&\frac{1}{2}\frac{\ud}{\ud{t}}\|\delta_\vv\|_{L^2}^2+\nu\|\na\delta_\vv\|_{L^2}^2
=-\big\langle\delta_\vv\cdot\na\vv_2,\delta_\vv\big\rangle+
\big\langle\nabla\cdot(\sigma_1(\vv_1,\nn_1)-\sigma_1(\vv_2,\nn_2)),\delta_\vv\big\rangle\non\\
&\quad+\big\langle\nabla\cdot\big(\sigma_2(\nn_1)-\sigma_2(\nn_2)\big),\delta_\vv\big\rangle+
\big\langle\nabla\cdot\big(\sigma^E(\nn_1)-\sigma^E(\nn_2)\big),\delta_\vv\big\rangle\non\\
&=R_1+R_2+R_3+R_4,\non
\end{align}
and make $H^1$ energy estimate for $\delta_\nn$ to get
\begin{align}
\frac{1}{2}\frac{\ud}{\ud{t}}\|\na\delta_\nn\|_{L^2}^2
=&\big\langle\vv_1\cdot\nabla\delta_\nn+\delta_\vv\cdot\nabla\nn_2,\Delta\delta_\nn\big\rangle\non\\
&+\big\langle\nn_1\times\big((\BOm_1\cdot\nn_1-\mu_1\hh_1-\mu_2\DD_1\cdot\nn_1)\times\nn_1\big)\non\\
&\qquad-\nn_2\times\big((\BOm_2\cdot\nn_2-\mu_1\hh_2-\mu_2\DD_2\cdot\nn_2)\times\nn_2\big),
\Delta\delta_\nn\big\rangle\non\\
=&S_1+S_2.\non
\end{align}
Now we estimate $R_1,\cdots, R_4$. It is easy to see that
\beno
&&R_1\le \|\na \vv_2\|_{L^\infty}\|\delta_\vv\|_{L^2}^2,\\
&&R_4\le C\big(\|\nabla\nn_1\|_{L^\infty}+\|\nabla\nn_2\|_{L^\infty}\big)
\|\nabla\delta_\nn\|_{L^2}\|\nabla\delta_\vv\|_{L^2}.
\eeno
By (\ref{dissipate relation}), we have
\begin{align}
R_2&=-\big\langle\sigma_1(\delta_\vv,\nn_1),\nabla\delta_\vv\big\rangle
-\big\langle\sigma_1(\vv_2,\nn_1)-\sigma_1(\vv_2,\nn_2),\nabla\delta_\vv\big\rangle\non\\
&\le{C}\|\nabla\vv_2\|_{L^3}\|\na\delta_\nn\|_{L^2}\|\nabla\delta_\vv\|_{L^2}.\non
\end{align}
For $R_3$, we have
\begin{align*}
R_3=&\mu_2\big\langle\nn_1\times(\hh_1\times\nn_1)\nn_1-\nn_2\times(\hh_2\times\nn_2)\nn_2,\delta_\DD\big\rangle\\
&\quad+\big\langle\nn_1\times(\hh_1\times\nn_1)\nn_1-\nn_2\times(\hh_2\times\nn_2)\nn_2,\delta_\BOm\big\rangle\\
&=\mu_2\big\langle\nn_1\times(\delta_\hh\times\nn_1)\nn_1,\delta_\DD\big\rangle
+\big\langle\nn_1\times(\delta_\hh\times\nn_1)\nn_1,\delta_\BOm\rangle\\
&\quad+\mu_2\big\langle\nn_1\times(\hh_2\times\nn_1)\nn_1-\nn_2\times(\hh_2\times\nn_2)\nn_2,\delta_\DD\big\rangle\\
&\quad+\big\langle\nn_1\times(\hh_2\times\nn_1)\nn_1-\nn_2\times(\hh_2\times\nn_2)\nn_2,\delta_\BOm\big\rangle\\
&\le\mu_2\big\langle\nn_1\times(\delta_\hh\times\nn_1)\nn_1,\delta_\DD\rangle
+\big\langle\nn_1\times(\delta_\hh\times\nn_1)\nn_1,\delta_\BOm\big\rangle\\
&\quad+C\|\Delta\nn_2\|_{L^3}\|\na\delta_\nn\|_{L^2}\|\nabla\delta_\vv\|_{L^2}.
\end{align*}
Let us turn to estimate $S_1$ and $S_2$. It is easy to see that
\beno
S_1\le \|\na\vv_1\|_{L^\infty}\|\na\delta_\nn\|_{L^2}^2+
C\big(\|\na\nn_2\|_{L^\infty}+\|\Delta\nn_2\|_{L^3}\big)\|\na\delta_\vv\|_{L^2}\|\na\delta_\nn\|_{L^2},
\eeno
and for $S_2$, we have
\begin{align*}
S_2=&\big\langle\nn_1\times\big((\delta_\BOm\cdot\nn_1-\mu_1\delta_\hh
-\mu_2\delta_\DD\cdot\nn_1)\times\nn_1\big),\Delta\delta_\nn\big\rangle\\
&+\big\langle\nn_1\times\big((\BOm_2\cdot\nn_1-\mu_1\hh_2-\mu_2\DD_2\cdot\nn_1)\times\nn_1\big)\\
&\qquad-\nn_2\times\big((\BOm_2\cdot\nn_2-\mu_1\hh_2-\mu_2\DD_2\cdot\nn_2)\times\nn_2\big),\Delta\delta_\nn\big\rangle\\
\le&\big\langle\nn_1\times\big((\delta_\BOm\cdot\nn_1-\mu_1\delta_\hh
-\mu_2\delta_\DD\cdot\nn_1)\times\nn_1\big),\Delta\delta_\nn\big\rangle\\
&+C\big(\|\na \vv_2\|_{L^\infty}+\|\Delta\vv_2\|_{L^3}+\|\Delta\nn_2\|_{L^2}+\|\na\Delta\nn_2\|_{L^3}\big)\|\nabla\delta_\nn\|_{L^2}^2.
\end{align*}

Summing up all the above estimates, we obtain
\beno
\frac{\ud}{\ud{t}}\big(\|\delta_\vv\|_{L^2}^2+\|\na\delta_\nn\|_{L^2}^2\big)
\le C\big(\|\delta_\vv\|_{L^2}^2+\|\na\delta_\nn\|_{L^2}^2\big),
\eeno
which implies that $\delta_\vv(t)=0$ and $\delta_\nn(t)=0$ on $[0,T]$.\vspace{0.1cm}

{\bf Step 5.} Blow-up criterion

First of all, the solution of (\ref{eq:EL}) satisfies $|\nn|=1$ if $|\nn_0|=1$.
Thus, it holds that
\ben\label{eq:harmonic}
\nn\times(\Delta\nn\times\nn)=\Delta\nn+|\na\nn|^2\nn.
\een
Hence, $I_4$ in (\ref{eq:energy-n}) can be written as
\begin{align}
I_4=&-\mu_1\big\langle\Delta^{s+1}\nn+\Delta^s(|\na\nn|^2\nn), \Delta^{s+1}\nn\big\rangle\non\\
=&-\mu_1\big\langle\Delta^{s+1}\nn, \Delta^{s+1}\nn\big\rangle
+\mu_1\big\langle\Delta^s\big(\na(|\na\nn|^2)\nn\big)+\Delta^s(|\na\nn|^2\na\nn), \na\Delta^{s}\nn\big\rangle,\non
\end{align}
which along with Lemma \ref{lem:product} gives
\begin{align*}
I_4\le& -\mu_1\big\langle\Delta^{s+1}\nn, \Delta^{s+1}\nn\big\rangle
+C\|\na\nn\|_{L^\infty}\|\na\nn\|_{H^{2s}}\|\Delta^{s+1}\nn\|_{L^2}\\
&\quad+C\|\na\nn\|_{L^\infty}^2\|\na\nn\|_{H^{2s}}^2.
\end{align*}
On the other hand, we can bound $II_3$ as
\beno
II_3\le C\big(\|\na\nn\|_{L^\infty}^2+\|\na\vv\|_{L^\infty}\big)
\big(\|\na\nn\|_{H^{2s}}^2+\|\vv\|_{H^{2s}}^2\big)+\delta\|\na\Delta^s\vv\|_{L^2}^2
\eeno
by using the commutator estimate like
\beno
\|\na[\Delta^s, f]\na g\|_{L^2}\le C\big(\|\Delta^s\na f\|_{L^2}\|\na g\|_{L^\infty}+\|\na f\|_{L^\infty}\|\Delta^s\na g\|_{L^2}\big).
\eeno
From the proof in Step 2, we can deduce that
\begin{align}
\frac{\ud}{\ud{t}}E_s(\vv,\nn)
\le C\big(1+\|\nabla\nn\|_{L^\infty}^2+\|\na\vv\|_{L^\infty}\big)E_s(\vv,\nn).\non
\end{align}
Recall the following Logarithmic Sobolev inequality from\cite{BKM}:
\beno
\|\na \vv\|_{L^\infty}\le C\big(1+\|\na\vv\|_{L^2}+\|\na\times\vv\|_{L^\infty})\log(2+\|\vv\|_{H^k})
\eeno
for any $k\ge 3$. Thus, we have
\begin{align}
\frac{\ud}{\ud{t}}E_s(\vv,\nn)
\le C\big(1+\|\na\vv\|_{L^2}+\|\nabla\nn\|_{L^\infty}^2+\|\na\times\vv\|_{L^\infty}\big)\log\big(2+E_s(\vv,\nn)\big)E_s(\vv,\nn).
\non
\end{align}
Applying Gronwall's inequality twice, we infer that
\beno
E_s(\vv,\nn)\le E_s(\vv_0,\nn_0)
\exp\exp\Big(C\int_0^t\big(1+\|\na\vv\|_{L^2}+\|\nabla\nn\|_{L^\infty}^2+\|\na\times\vv\|_{L^\infty}\big)\ud \tau\Big)
\eeno
for any $t\in [0,T^*)$. Especially, if $T^*<+\infty$ and
\beno
\int_0^{T^*}\big(\|\nabla\nn\|_{L^\infty}^2+\|\na\times\vv\|_{L^\infty}\big)\ud t<+\infty,
\eeno
then $E_s(\vv,\nn)(t)\le C$ for any $t\in [0,T^*)$. Thus, the solution can
be extended after $t=T^*$, which contradicts the definition of $T^*$. The blow-up criterion follows.

\section{Global well-posedness for small initial data}

This section is devoted to the proof of Theorem \ref{thm:global}. Assume that
$(\vv,\nn)$ is the solution of the system (\ref{eq:EL}) on $[0,T]$ obtained in
Theorem \ref{thm:local}. We define
\begin{align}
&E_s(\vv,\nn)\eqdefa \|\nabla\nn\|_{L^2}^2+\|\nabla\Delta^s\nn\|_{L^2}^2
+\|\vv\|_{L^2}^2+\|\Delta^s\vv\|_{L^2}^2,\non\\
&D_s(\vv,\nn)\eqdefa   \mu_1\|\Delta\nn\|_{L^2}^2+\mu_1\|\Delta^{s+1}\nn\|_{L^2}^2
+\nu\|\na\vv\|_{L^2}^2+\nu\|\Delta^s\na\vv\|_{L^2}^2.\non
\end{align}
By the interpolation, there exist $c_0>0$ and $C_0>0$ such that
\beno
&&c_0\big(\|\na\nn\|_{H^{2s}}^2+\|\vv\|_{H^{2s}}^2\big)\le E_s(\vv,\nn)
\le C_0\big(\|\na\nn\|_{H^{2s}}^2+\|\vv\|_{H^{2s}}^2\big), \\
&&c_0\big(\|\Delta\nn\|_{H^{2s}}^2+\|\na\vv\|_{H^{2s}}^2\big)\le D_s(\vv,\nn)
\le C_0\big(\|\Delta\nn\|_{H^{2s}}^2+\|\na\vv\|_{H^{2s}}^2\big).
\eeno

The basic energy-dissipation law tells us that
\begin{align}
\frac{1}{2}\frac{\ud}{\ud{t}}\int_{\BR}|\vv|^2+|\nabla\nn|^2\ud\xx
+\int_{\BR}\big(\nu|\nabla\vv|^2+\mu_1|\nn\times\hh|^2\big)\ud\xx\le 0,\non
\end{align}
which along with (\ref{eq:harmonic}) implies that
\begin{align}
&\frac{1}{2}\frac{\ud}{\ud{t}}\int_{\BR}|\vv|^2+|\nabla\nn|^2\ud\xx
+\int_{\BR}\big(\nu|\nabla\vv|^2+\mu_1|\Delta\nn|^2\big)\ud\xx\non\\
&\le \mu_1\int_{\BR}|\na\nn|^4\ud\xx\le C\|\na\nn\|_{L^2}\|\Delta\nn\|_{L^2}^3
\le CE_s(\vv,\nn)D_s(\vv,\nn).\label{eq:energy-L2}
\end{align}
Similar to (\ref{eq:energy-n}), we have
\begin{align}\label{eq:energy-gn}
\frac{1}{2}\frac{\ud}{\ud{t}}\big\langle\nabla\Delta^s\nn, \nabla\Delta^s\nn\big\rangle
&=\big\langle\Delta^s(\vv\cdot\nabla\nn),\Delta^{s+1}\nn\big\rangle
+\big\langle\Delta^s\big[\nn\times\big((\BOm\cdot\nn)\times\nn\big)\big],\Delta^{s+1}\nn\big\rangle\nonumber\\
&\quad-\mu_2\big\langle\Delta^s\big[\nn\times\big((\DD\cdot\nn)\times\nn\big)\big],\Delta^{s+1}\nn\big\rangle
-\mu_1\big\langle\Delta^s\big[\nn\times\big(\Delta\nn\times\nn\big)\big],\Delta^{s+1}\nn\big\rangle\nonumber\\
&=I_1+I_2+I_3+I_4.
\end{align}
We get by Lemma \ref{lem:product} and Sobolev embedding that
\begin{align}
I_1\le& \|\Delta^{s}(\vv\cdot\na\vv)\|_{L^2}\|\Delta^{s+1}\nn\|_{L^2}\non\\
\le& C\|\vv\|_{L^\infty}\|\Delta^s\na \vv\|_{L^2}\|\Delta^{s+1}\nn\|_{L^2}
\le CE_s(\vv,\nn)^\f12D_s(\vv,\nn);
\end{align}
and
\begin{align}
I_2=&\big\langle\nn\times\big((\Delta^s\BOm\cdot\nn)\times\nn\big),\Delta^{s+1}\nn\big\rangle\non\\
&+\big\langle\Delta^s\big[\nn\times\big((\BOm\cdot\nn)\times\nn\big)\big],\Delta^{s+1}\nn\big\rangle
-\big\langle\nn\times\big((\Delta^s\BOm\cdot\nn)\times\nn\big),\Delta^{s+1}\nn\big\rangle\non\\
\le&\big\langle\nn\times\big((\Delta^s\BOm\cdot\nn)\times\nn\big),\Delta^{s+1}\nn\big\rangle\non\\
&+C\big(\|\na\nn\|_{L^\infty}\|\na\vv\|_{H^{2s-1}}+\|\Delta^s\nn\|_{L^2}\|\na\vv\|_{L^\infty}\big)
\|\Delta^{s+1}\nn\|_{L^2}\non\\
\le&\big\langle\nn\times\big((\Delta^s\BOm\cdot\nn)\times\nn\big),\Delta^{s+1}\nn\big\rangle
+CE_s(\vv,\nn)^\f12D_s(\vv,\nn);\\
I_3\le&-\mu_2\big\langle\nn\times\big((\Delta^s\DD\cdot\nn)\times\nn\big),\Delta^{s+1}\nn\big\rangle
+CE_s(\vv,\nn)^\f12D_s(\vv,\nn);
\end{align}
Similar to Step 4 in Section 4, we have
\begin{align}\label{eq:energy-gn4}
I_4\le& -\mu_1\big\langle\Delta^{s+1}\nn, \Delta^{s+1}\nn\big\rangle
+C\big(\|\na\nn\|_{L^\infty}+\|\na\nn\|_{L^\infty}^2\big)\|\Delta^s\nn\|_{H^{1}}\|\Delta^{s+1}\nn\|_{L^2}\non\\
\le&-\mu_1\big\langle\Delta^{s+1}\nn, \Delta^{s+1}\nn\big\rangle+C\big(E_s(\vv,\nn)^\f12+E_s(\vv,\nn)\big)D_s(\vv,\nn).
\end{align}

Summing up (\ref{eq:energy-gn})-(\ref{eq:energy-gn4}), we obtain
\begin{align}\label{eq:energy-gnf}
&\frac{1}{2}\frac{\ud}{\ud{t}}\big\langle\nabla\Delta^s\nn, \nabla\Delta^s\nn\big\rangle
+\mu_1\big\langle\Delta^{s+1}\nn, \Delta^{s+1}\nn\big\rangle\non\\
&\le\big\langle\nn\times\big((\Delta^s\BOm\cdot\nn)\times\nn\big),\Delta^{s+1}\nn\big\rangle-
\mu_2\big\langle\nn\times\big((\Delta^s\DD\cdot\nn)\times\nn\big),\Delta^{s+1}\nn\big\rangle\non\\
&\qquad+C\big(E_s(\vv,\nn)^\f12+E_s(\vv,\nn)\big)D_s(\vv,\nn).
\end{align}

Now we consider the estimate for the velocity. By Step 2 in Section 4, we have
\begin{align}\label{eq:energy-gv}
&\frac{1}{2}\frac{\ud}{\ud{t}}\langle\Delta^s\vv,\Delta^s\vv\rangle+\nu\langle\nabla\Delta^s\vv,\nabla\Delta^s\vv\rangle\non\\
&=-\langle\Delta^s(\vv\cdot\nabla\vv),\Delta^s\vv\rangle
+\langle\Delta^s(\nabla\nn\odot\nabla\nn), \Delta^s\nabla\vv\rangle\non\\
&\quad-\Big\langle\Delta^s\big(\beta_1(\nn\nn:\DD)\nn\nn+\beta_2\DD
+\frac{\beta_3}{2}(\nn\DD\cdot\nn+\DD\cdot\nn\nn)\big),\Delta^s\DD\Big\rangle\non\\
&\qquad+\mu_2\big\langle\Delta^s\big(\nn\times(\hh\times\nn)\nn\big),\Delta^s\DD\big\rangle
-\big\langle\Delta^s(\nn\times(\hh\times\nn)\nn),\Delta^s\BOm\big\rangle\non\\
&=II_1+II_2+II_3+II_4+II_5.
\end{align}
We get by Lemma \ref{lem:product} and Sobolev embedding that
\begin{align}
II_1&\le C\|\vv\|_{L^\infty}\|\Delta^s\vv\|_{L^2}\|\Delta^s\na\vv\|_{L^2}\le CE_s(\vv,\nn)^\f12D_s(\vv,\nn);\\
II_2&\le C\|\na\nn\|_{L^\infty}\|\Delta^s\na\nn\|_{L^2}\|\Delta^s\na\vv\|_{L^2}\le CE_s(\vv,\nn)^\f12D_s(\vv,\nn);
\end{align}
and by Proposition \ref{prop:Leslie condition},
\begin{align}
II_3\le&-\Big\langle\big(\beta_1(\nn\nn:\Delta^s\DD)
\nn\nn+\beta_2\Delta^s\DD+\frac{\beta_3}{2}(\nn\Delta^s\DD\cdot\nn
+\Delta^s\DD\cdot\nn\nn)\big),\Delta^s\DD\Big\rangle\non\\
&+C\big(\|\na\vv\|_{L^\infty}\|\Delta^s\nn\|_{L^2}+\|\na\nn\|_{L^\infty}\|\na\vv\|_{H^{2s-1}}\big)
\|\Delta^s\na\vv\|_{L^2}\non\\
\le& CE_s(\vv,\nn)^\f12D_s(\vv,\nn);
\end{align}
Similarly, we have
\begin{align}\label{eq:energy-gv5}
II_4+II_5\le&\mu_2\big\langle\nn\times(\Delta^{s+1}\nn\times\nn),\Delta^s\DD\cdot\nn\big\rangle
-\big\langle\nn\times(\Delta^{s+1}\times\nn),\Delta^s\BOm\cdot\nn\big\rangle\non\\
&+CE_s(\vv,\nn)^\f12D_s(\vv,\nn).
\end{align}

Summing up (\ref{eq:energy-gv})-(\ref{eq:energy-gv5}), we obtain
\begin{align}\label{eq:energy-gvf}
&\frac{1}{2}\frac{\ud}{\ud{t}}\langle\Delta^s\vv,\Delta^s\vv\rangle+\nu\langle\nabla\Delta^s\vv,\nabla\Delta^s\vv\rangle\non\\
&\le\mu_2\big\langle\nn\times(\Delta^{s+1}\nn\times\nn),\Delta^s\DD\cdot\nn\big\rangle
-\big\langle\nn\times(\Delta^{s+1}\times\nn),\Delta^s\BOm\cdot\nn\big\rangle\non\\
&\qquad+CE_s(\vv,\nn)^\f12D_s(\vv,\nn).
\end{align}

It follows from (\ref{eq:energy-L2}), (\ref{eq:energy-gnf}) and (\ref{eq:energy-gvf}) that
\begin{align}
\frac{1}{2}\frac{\ud}{\ud{t}}E_s(\vv,\nn)+D_s(\vv,\nn)\le C\big(E_s(\vv,\nn)^\f12+E_s(\vv,\nn)\big)D_s(\vv,\nn).\non
\end{align}
This implies that there exists an $\ve_0>0$ such that
if $E_s(\vv_0,\nn_0)\le \ve_0$, then
\beno
E_s(\vv,\nn)(t)\le E_s(\vv_0,\nn_0)\quad \textrm{for any }t\in [0,T].
\eeno
Thus, the solution is global in time by blow-up criterion in Theorem \ref{thm:local}.

\section*{Acknowledgements}

{\small The authors thank Professor Fang-Hua Lin for helpful discussions and suggestions.
P. Zhang is partly supported by NSF of China under Grant 11011130029.
Z. Zhang is partly supported by NSF of China under Grant 10990013 and 11071007.}

\end{document}